\def\mod#1{{\ifmmode\text{\rm\ (mod~$#1$)}
\else\discretionary{}{}{\hbox{ }}\rm(mod~$#1$)\fi}}
\newtheorem{theorem}{Theorem}[section]
\newtheorem{lemma}[theorem]{Lemma}
\newtheorem{corollary}[theorem]{Corollary}
\newtheorem{proposition}[theorem]{Proposition}
\newtheorem{conjecture}[theorem]{Conjecture}
\theoremstyle{remark}
\numberwithin{equation}{section}
\def\mod#1{{\ifmmode\text{\rm\ (mod~$#1$)}
\else\discretionary{}{}{\hbox{ }}\rm(mod~$#1$)\fi}}
\begin{document}
\title{Arithmetic progressions in squarefull numbers }

\author{Prajeet Bajpai}
\address{Department of Mathematics, University of British Columbia, Vancouver, BC Canada V6T 1Z2}
\email{prajeet@math.ubc.ca}

\author{Michael A. Bennett}
\address{Department of Mathematics, University of British Columbia, Vancouver, BC Canada V6T 1Z2}
\email{bennett@math.ubc.ca}
\thanks{The second author was supported in part by a grant from NSERC}

\author{Tsz Ho Chan}
\address{Department of Mathematics, Kennesaw State University, Marietta, GA U.S.A. 30060}
\email{tchan4@kennesaw.edu}



\date{\today}
\keywords{}
\begin{abstract}
We answer a number of questions of Erd\H{o}s on the existence of arithmetic progressions in $k$-full numbers (i.e. integers with the property that every prime divisor necessarily occurs to at least the $k$-th power). Further, we deduce a variety of arithmetic constraints upon such progressions, under the assumption of the $abc$-conjecture of Masser and Oesterl\'e.
\end{abstract}

\maketitle

\section{Introduction}

Let $k \geq 2$ be an integer. We call a positive integer $n$  {\it $k$-full} if each prime divisor  $p$ of $n$ has the property that $p^k \mid n$. In case $k=2$, such integers are also known as 
 {\it squarefull} or {\it powerful} (with the latter notation being due to Golomb \cite{Go}). These integers were first studied in 1934 by Erd\H{o}s and Szekeres \cite{ErSz}, who proved that if we denote by $P_k(x)$ the number of $k$-full positive integers $\leq x$, then 
 $$
 P_2(x) = \frac{\zeta(3/2)}{\zeta(3)} x^{1/2} + O(x^{1/3})
 $$
and, more generally,
 $$
 P_k(x) = c_k x^{1/k} + O(x^{1/(k+1)}),
 $$
 where
 $$
 c_k = \prod_{p \; \mbox{\tiny{prime}}} \left( 1+\sum_{m=k+1}^{2k-1} p^{-m/k} \right).
 $$
 These asymptotics have been subsequently sharpened by Bateman and Grosswald \cite{BaGr}, extended to short intervals by, for example, Trifonov \cite{Tri}, Liu \cite{Liu} and the third author \cite{Chan-2020}, and to arithmetic progressions by Liu and Zhang \cite{LZ}, the third author and Tsang \cite{Chan-Tsang}, and the third author \cite{Chan-2015}.
 
Despite being a rather thin set, the $k$-full numbers satisfy somewhat less rigid local constraints than, for example, the comparably sized set of $k$-th perfect powers. In particular, while the squares contain three-term arithmetic progressions, they contain no four-term arithmetic progressions, and  the $k$-th perfect powers, for $k \geq 3$, contain no three-term arithmetic progressions. The squarefull numbers (and, more generally, $k$-full numbers), on the other hand, contain arithmetic progressions of arbitrary length (this was first observed by Makowski \cite{Ma}; see also Theorem 3 of \cite{Chan-2022}). Indeed, if we define
\begin{equation} \label{ex}
d=N=\prod_{p \leq m} p^k,
\end{equation}
where the product is over primes, then each of $N, N+d, \ldots, N+(m-1)d$ is $k$-full.

In 1975, Erd\H{o}s \cite{Erd-Man-76} posed a number of questions about $k$-full numbers, from a variety of viewpoints, discussing their additive properties (notably treated by Heath-Brown \cite{HB}), gaps between them, and the existence of arithmetic progressions in $k$-full numbers.
In the paper at hand, we will focus on the last of these topics.
Regarding this, 
Erd\H{o}s asked ``Are there infinitely many quadruples of relatively prime powerful numbers which form an arithmetic progression?''. He then defined $A(k)$ to be the largest integer for which there are $A(k)$ relatively prime $k$-full numbers in arithmetic progression, and $A^\infty (k)$ to be the largest integer for which there are infinitely many sets of $A^\infty (k)$ 
relatively prime $k$-full numbers in arithmetic progression. He conjectured, ``on rather flimsy probabilistic grounds'', that $A^\infty (k) = 2$ for all $k \geq 4$, but that $A^\infty (3) = 3$ (actually, he stated that $A^\infty (k) = 0$ for all $k \geq 4$, but this is a typographical error).

We will answer Erd\H{o}s' question in the affirmative, by constructing  infinite families of $4$-term arithmetic progressions of relatively prime powerful numbers, and $3$-term progressions of relatively prime $3$-full numbers. We will then show that Erd\H{o}s' conjectures on $A^\infty (k)$ follow (in somewhat nontrivial fashion) from the $abc$-conjecture of Masser and Oesterl\'e. In fact, we will prove the following results. The first of these may be viewed as saying, conjecturally at least, that $m$-term arithmetic progressions of  $k$-full numbers $N, N+d, \ldots, N+(m-1)d$ resemble those constructed in (\ref{ex}), in the sense that $\gcd (d,N)$ is ``large'' and that $d$ and $N$ are of roughly the same size.

\begin{theorem} \label{thm-kAPbounds}
Under the assumption of the $abc$-conjecture, if $m \geq 3$ and $k \geq 2$ are integers, then if $N$ and $d$ are positive integers  such that
$$
N, N+d, \ldots, N+(m-1)d
$$
are each $k$-full numbers in arithmetic progression, then, if $\epsilon > 0$, we have
\begin{equation} \label{fundy}
\gcd (d, N) \gg   (\max \{ d, N \} )^{\frac{m(1-1/k)  - 2}{m(1-1/k^2)  - 2} - \epsilon},
\end{equation}
\begin{equation} \label{fundy2}
d \gg N^{\frac{m(1-1/k)  - 1}{m(1-1/k^2)  - 1} - \epsilon}
\end{equation}
and
\begin{equation} \label{fundy3}
N \gg d^{\frac{m(1-1/k)+1/k-2}{m(1-1/k^2)+1/k-2} - \epsilon}.
\end{equation}
 If, further, $m \geq 2k-1$, we have the stronger inequalities
\begin{equation} \label{fundy4}
\gcd (d, N) \gg (\max \{ d, N \})^{\frac{m(1-1/k)  - 2}{m(1-1/(2k-1))  - 2} - \epsilon},
\end{equation}
\begin{equation} \label{fundy5}
d \gg N^{\frac{m(1-1/k)  - 1}{m(1-1/(2k-1))  - 1} - \epsilon}
\end{equation}
and
\begin{equation} \label{fundy6}
N \gg d^{\frac{m(1-1/k)+1/k-2}{m(1-1/(2k-1))+1/k-2} - \epsilon}.
\end{equation}
In each case, the implied constants depends upon $\epsilon, k$ and $m$.
\end{theorem}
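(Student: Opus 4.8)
The plan is to apply the $abc$-conjecture to linear (and one family of quadratic) identities among the integers $n_i := N+id$, using two structural facts about $k$-full numbers: (i) if $n$ is $k$-full then $\mathrm{rad}(n) \le n^{1/k}$, and in fact $n = b^k c$ with $c$ a $k$-full number for which $\mathrm{rad}(c) = \mathrm{rad}(n)$ and $\mathrm{rad}(n) \le c \le \mathrm{rad}(n)^{2k-1}$; and (ii) the greatest common divisor of two $k$-full integers is $k$-full. Fact (ii) is what governs the shape of the exponents: since the $n_i$ are in arithmetic progression, $G := \gcd(d,N) = \gcd(n_0,n_1) = \gcd(d,n_i)$ for every $i$, so $G$ is a gcd of $k$-full numbers and is therefore $k$-full; in particular $\mathrm{rad}(G) \le G^{1/k}$, and this is the source of the factor $1/k^2$ appearing alongside $1/k$ in (\ref{fundy})--(\ref{fundy3}).

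First I would write $d = Gd'$, $N = GN'$ with $\gcd(d',N')=1$, so that $n_i = G(N'+id')$, $\gcd(N'+id',\,d')=1$, and $\gcd(n_i,n_j) \mid (i-j)G$ for $i \ne j$. A short valuation argument shows that every prime $p > m-1$ dividing $n_i/G$ but not $G$ satisfies $p^k \mid n_i/G$; hence $n_i/G$ is $k$-full away from the (boundedly many) primes $\le m-1$ together with the primes dividing $G$, and combining this with $\mathrm{rad}(G) \le G^{1/k}$ gives sharp bounds for $\mathrm{rad}(n_i)$, and for $\mathrm{rad}$ of products of the $n_i$ after removing their common factors. Armed with these, I would feed into the $abc$-conjecture, for each triple $0 \le i < j < l \le m-1$, the identity $(l-j)n_i + (j-i)n_l = (l-i)n_j$ --- whose three terms have common divisor $\delta$ with $G \mid \delta$ and $\delta \le (m-1)^3 G$ --- and, for the finer dependence on $d$, the identity $n_0 n_{m-1} + i(m-1-i)d^2 = n_i n_{m-1-i}$, whose three terms have common divisor exactly $G^2$ up to primes $\le m-1$. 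Clearing $\delta$ and estimating the radical of the resulting primitive triple by the bounds above yields, for each choice of indices, an inequality bounding a product of $n_i$'s by a bounded power of $G$ times the $(1+\epsilon)$-power of a $1/k$-th (respectively $1/(2k-1)$-st) power of the radical data.

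I would then multiply these inequalities over a well-chosen family of index choices --- keeping a fixed cheap pair of indices and letting the remaining one run, weighted so that the error terms telescope --- and collect everything into a single master inequality relating $\prod_{i=0}^{m-1} n_i$, the quantities $G$ and $d$, and a lower-order error with constant depending only on $\epsilon, k, m$, in which $G$ carries an exponent linear in $m$ with leading coefficient $1-1/k^2$ (respectively $1-1/(2k-1)$ when $m \ge 2k-1$, which is where the $b^k c$ decomposition buys the improvement, there being enough terms to absorb the $k$-free parts $c$ without loss). Finally, setting $T := \max\{d,N\}$ and using the elementary inclusions $G \mid d$ and $G \mid N$ (so $G \le \min\{d,N\}$), together with $\prod_i n_i \asymp_{k,m} N d^{m-1}$ when $d \ge N$ and $\prod_i n_i \asymp_{k,m} N^m$ when $N \ge d$, I would substitute, respectively, $\min\{d,N\} \ge 1$, $G \le d$, and $G \le N$ into the master inequality and read off (\ref{fundy}), (\ref{fundy2}) and (\ref{fundy3}); the same substitutions applied to the refined master inequality give (\ref{fundy4})--(\ref{fundy6}). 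The hard part --- and essentially all of the labour --- will be the combinatorial optimization: choosing the family of index choices and the weights so that the exponent of $G$ comes out to be exactly $m(1-1/k^2)-2$ (respectively $m(1-1/(2k-1))-2$) against the competing exponent $m(1-1/k)-2$ of $T$, while keeping uniform control of the gcd- and radical-losses at the primes $\le m-1$ and the primes of $G$; the slightly different constants $-2,-1,1/k-2$ in (\ref{fundy})--(\ref{fundy3}) reflect only which of the three substitutions $\min\{d,N\}\ge 1$, $G\le d$, $G\le N$ is used at the end.
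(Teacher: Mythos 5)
Your structural observations are sound --- $\gcd(d,N)$ is $k$-full, individual radicals of $k$-full numbers are controlled, and the $n=b^kc$ decomposition with $\mathrm{rad}(n)\le c\le\mathrm{rad}(n)^{2k-1}$ is exactly what drives the improvement from $1/k^2$ to $1/(2k-1)$ when $m\ge 2k-1$ (this is the paper's Lemma \ref{roadie}). The gcd-clearing and the bookkeeping at primes $\le m-1$ are also in the right spirit. However, the core strategy --- applying the $abc$-conjecture to a family of three-term linear identities $(l-j)n_i+(j-i)n_l=(l-i)n_j$ (plus the quadratic identities $n_0n_{m-1}+i(m-1-i)d^2=n_in_{m-1-i}$) and then multiplying the resulting inequalities together --- cannot produce the exponent $m(1-1/k)-2$ against $m(1-1/k^2)-2$, and the ``combinatorial optimization'' you defer to is precisely where the plan breaks down.

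Here is the obstruction. Each $abc$ application to a linear triple gives, after clearing $G$ and lower-order factors and using $\mathrm{rad}(n_i/G)\ll n_i^{1/k}/G^{1/k^2}$, an inequality whose left side scales like $(N/G)^1$ and whose radical side scales like $(N^{1/k}/G^{1/k^2})^3$ --- a fixed $1:3$ ratio between ``height degree'' and ``radical count.'' Multiplying $r$ such inequalities, with any non-negative weights and any choice of triples, produces a left exponent $W=\sum w$ and a radical $n$-count of exactly $3W$, because every linear identity contributes three index slots; the ratio remains $1:3$, so the conclusion is always $N^{1-3/k}\ll G^{1-3/k^2}$, i.e.\ only the $m=3$ exponent, regardless of $m$. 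This is vacuous for $k\in\{2,3\}$ and far weaker than \eqref{fundy} for every $k$. The quadratic identities make this worse, not better: they scale like $(N^2/G^2)^1$ against $(N^{1/k}/G^{1/k^2})^4\cdot(d/G)$, a $2:5$ ratio once $d\le N$ is used. What the paper exploits --- and what your outline is missing --- is a single $abc$ application to the Shorey--Tijdeman polynomial identity (Lemma \ref{lem-identity}, equation \eqref{kabc0}), where the terms $N+jd$ appear raised to binomial-coefficient powers $\binom{m-1}{j}$. Since raising to a power does not enlarge the radical, the maximum grows like $(N/t)^{2^{m-2}}$ while the radical contribution of the $m$ progression terms is only $(N^{1/k}/t^{1/k^2})^m$, and the ``error'' term $d^{m-1}G_d(N)$ costs only $(d/t)\cdot(N/t)^{2^{m-2}-m+1}$. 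After cancellation the effective comparison is $(N/t)^{m-2}$ against $(N^{1/k}/t^{1/k^2})^m$, a ratio $m-2:m$ that tends to $1$ as $m\to\infty$, which is exactly what yields $m(1-1/k)-2$ versus $m(1-1/k^2)-2$. No product of fixed-ratio three-term $abc$ inequalities can achieve a ratio better than $1:3$, so without this polynomial identity (or some other device yielding large prime-to-prime powers on both sides of a single identity), the argument cannot close.
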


Note that the bound (\ref{fundy}) is nontrivial for all pairs $(m,k)$ except for
\begin{equation} \label{except}
(m,k) \in \{ (3,2), (3,3), (4,2) \}.
\end{equation}
For these cases, we have the following.
\begin{theorem} \label{thm2}
For each pair of integers $(m,k)$ in (\ref{except}), 
there exist infinitely many $m$-term arithmetic progressions of $k$-full numbers $N, N+d, \ldots, N+(m-1)d$ with the property that $\gcd(d,N)=1$.
\end{theorem}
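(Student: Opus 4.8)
The plan is to handle all three cases by a single geometric reduction. Given a putative $m$-term progression $N, N+d, \ldots, N+(m-1)d$ of $k$-full numbers, write $N+id = c_i t_i^k$ with $c_i$ a $k$-full number (a ``$k$-full part'' of $N+id$) and $t_i$ a positive integer, fix the tuple $(c_0,\ldots,c_{m-1})$, and use the cases $i=0,1$ to eliminate $N$ and $d$; the remaining $m-2$ arithmetic-progression conditions read
\[
c_i\, t_i^k \;=\; (1-i)\,c_0\, t_0^k + i\, c_1\, t_1^k \qquad (2 \le i \le m-1),
\]
and cut out a complete intersection $C = C(c_0,\ldots,c_{m-1})$ of $m-2$ hypersurfaces of degree $k$ in $\mathbb{P}^{m-1}$. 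Adjunction gives arithmetic genus $1 + \tfrac12 k^{m-2}\big((m-2)k - m\big)$, which equals $0$ when $(m,k)=(3,2)$ and $1$ when $(m,k)\in\{(3,3),(4,2)\}$ --- exactly the pairs in (\ref{except}); moreover $C$ turns out to be smooth in each of these cases. Thus in each case $C$ can carry infinitely many rational points, and the task is to produce a tuple $(c_i)$ for which it does, and then to upgrade those points to genuine integral progressions with $\gcd(N,d)=1$.

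For $(m,k)=(3,2)$ one takes all $c_i=1$, so that $C$ is the conic $t_0^2 - 2t_1^2 + t_2^2 = 0$, with the rational point corresponding to the progression $1,25,49$. A rational parametrisation of $C$ then yields a one-parameter polynomial family of three-term progressions of squares $a^2, b^2, c^2$, and since $\gcd(d,N) = \gcd(b^2 - a^2, a^2) = \gcd(a,b)^2$, restricting the parameter to a congruence class forcing $\gcd(a,b)=1$ produces infinitely many primitive examples.

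For $(m,k)\in\{(3,3),(4,2)\}$ the curve $C$ has genus one for every admissible $(c_i)$: when $(m,k)=(4,2)$ it is the intersection in $\mathbb{P}^3$ of the two quadric cones $c_0 t_0^2 - 2c_1 t_1^2 + c_2 t_2^2 = 0$ and $2c_0 t_0^2 - 3c_1 t_1^2 + c_3 t_3^2 = 0$, whose pencil always has four distinct singular members, so $C$ is smooth; when $(m,k)=(3,3)$ it is the diagonal plane cubic $c_0 X^3 - 2c_1 Y^3 + c_2 Z^3 = 0$. I would then search for a tuple $(c_i)$ of $k$-full numbers --- which are forced to be pairwise coprime if $\gcd(N,d)=1$ is to hold --- for which $C$ has a rational point, so that it becomes an elliptic curve over $\mathbb{Q}$, and has positive Mordell--Weil rank; starting from an explicit point $P$ of infinite order, the multiples $nP$ give infinitely many rational points on $C$, and clearing denominators turns a suitable infinite subfamily of these into $m$-term progressions of $k$-full numbers. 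It then remains to secure $\gcd(N,d)=1$: since the $c_i$ are pairwise coprime, any prime $p$ dividing two of the terms must divide an index difference $i-j$ and so is at most $3$, whence $p^k$ divides both terms and hence $(i-j)d$; as $|i-j| < m \le 4$ and $k \ge 2$, this quickly forces $p \mid d$ and $p \mid N$, a situation one removes by imposing finitely many congruence conditions on the coordinates of $P$ (equivalently, on $n$).

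The main obstacle is the middle step in the two genus-one cases: exhibiting a $k$-full tuple $(c_i)$ for which the associated genus-one curve is everywhere locally soluble, carries a global rational point, and has positive rank. I expect this to require a guided computer search --- over intersections of pairs of quadrics for $(4,2)$, over diagonal plane cubics for $(3,3)$ --- followed by a descent or canonical-height computation to certify the rank and to produce a generator; one should expect the generator, and hence the smallest progressions the method outputs, to be large compared with the example in (\ref{ex}). A secondary, more routine, point is to check that the required coprimality congruences, together with the sign conditions making $N, N+d, \ldots$ all positive, can be met simultaneously along an entire infinite subfamily rather than only sporadically.
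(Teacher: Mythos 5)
Your geometric reduction is the right framework, and it matches the spirit of the paper's constructions: the genus formula you state does indeed explain why exactly the pairs in (\ref{except}) are amenable, and the $(3,2)$ case (conic $t_0^2 - 2t_1^2 + t_2^2 = 0$, parametrize, pick $a,b$ coprime of opposite parity) is essentially identical to the paper's treatment. So the approach is not wrong. The problem is that for $(3,3)$ and $(4,2)$ you stop at a plan. You correctly identify that one needs a $k$-full tuple $(c_i)$ for which the associated genus-one curve has a rational point and positive rank, and then you say you ``expect this to require a guided computer search'' and flag it as ``the main obstacle.'' That is the theorem; until the tuple, the curve, the point of infinite order, and the congruence conditions are exhibited, there is no proof. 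The paper supplies these. For $(3,3)$ it works on $X^3+Y^3 = 2\cdot 3^4 Z^3$ (i.e.\ $(c_0,c_1,c_2)=(1,81,1)$), seeded by $(37,17,7)$, and propagates coprimality via an explicit composition formula $X_{i+1}=X_i(X_i^3+2Y_i^3)$, $Y_{i+1}=-Y_i(2X_i^3+Y_i^3)$. For $(4,2)$ it parametrizes the first three square conditions via $N_0^2+s^2=2r^2$, lands on a binary quartic $F(a,b)$ for the fourth term, twists by $73$ to get a rank-$2$ elliptic curve, and then runs a $2$-adic and $73$-adic analysis of the division polynomials $\psi_n,\phi_n,\Omega_n$ to select residue classes $n\equiv 404\pmod{16\cdot 73}$ for which $73^3\mid F(a,b)$ and $a,b$ have opposite parity. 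None of this is routine, and none of it is in your write-up.

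Two further points you underweight or get slightly wrong. First, positivity: you call the sign conditions a ``secondary, more routine'' matter, but in the paper's $(3,3)$ construction it is genuinely delicate --- the iteration can keep producing mixed-sign triples, and the paper devotes a page to showing (via a growth estimate $|Y_{i+1}/X_{i+1}| > 1+6\delta$) that after $O(\log|X_i|)$ steps one must land on an all-positive triple. Second, your closing coprimality argument has a logical slip: you write that any prime $p$ dividing two terms ``must divide an index difference $i-j$ and so is at most $3$,'' but $p\mid (N+id)$ and $p\mid (N+jd)$ only gives $p\mid (i-j)d$; the case $p\mid d$ (hence $p\mid N$, hence $p$ arbitrary) is exactly the one you are trying to exclude, so you cannot conclude $p\le 3$. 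The coprimality in all three cases of the paper is instead enforced \emph{a priori} through the chosen parametrizations (coprime $a,b$ of opposite parity; preservation of $\gcd(X_i,Y_i)=1$ by the duplication map; $\gcd(N_0,6d_0)=1$), not cleaned up afterwards by congruences.
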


Combining these theorems yields the following as an immediate corollary.
\begin{corollary}
The $abc$-conjecture implies that
$$
A^\infty (2) = 4, \; A^\infty (3) = 3 \; \mbox{ and } \; A^\infty (k) = 2, \; \mbox{ for all } k \geq 4.
$$
\end{corollary}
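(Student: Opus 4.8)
The plan is to read off the three equalities by pairing the unconditional constructions of Theorem \ref{thm2} with the conditional bounds of Theorem \ref{thm-kAPbounds}. Recall that $A^\infty(k)$ is, by definition, the largest integer $m$ for which there exist infinitely many $m$-term arithmetic progressions $N, N+d, \dots, N+(m-1)d$ of $k$-full numbers that are relatively prime, i.e.\ satisfy $\gcd(N,d)=1$. Two preliminary remarks make the deduction formal. First, truncating such a progression to its first $m-1$ terms produces an $(m-1)$-term progression with the \emph{same} $N$ and $d$, hence again relatively prime, and distinct progressions truncate to distinct ones; so the existence of infinitely many $m$-term examples forces the existence of infinitely many $(m-1)$-term examples, and consequently $A^\infty(k)\leq m_0$ whenever there are only finitely many $(m_0+1)$-term relatively prime $k$-full progressions. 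Second, whenever the exponent in (\ref{fundy}) is strictly positive, choosing $\epsilon$ smaller than it shows, via (\ref{fundy}), that a relatively prime $k$-full progression (for which $\gcd(d,N)=1$) must have $\max\{d,N\}$ bounded in terms of $m$ and $k$ alone; hence there are then only finitely many such progressions.

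For the lower bounds, I would invoke Theorem \ref{thm2}: applied with $(m,k)=(4,2)$ it provides infinitely many $4$-term relatively prime powerful progressions, so $A^\infty(2)\geq 4$, and applied with $(m,k)=(3,3)$ it provides infinitely many $3$-term relatively prime $3$-full progressions, so $A^\infty(3)\geq 3$. For $k\geq 4$ we need only $A^\infty(k)\geq 2$, which is immediate: the $k$-th powers are $k$-full, and the pairs $\{a^k,(a+1)^k\}$ with $a\geq 1$ (noting $\gcd(a,a+1)=1$) form infinitely many relatively prime $2$-term $k$-full progressions.

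For the upper bounds, I would apply inequality (\ref{fundy}) of Theorem \ref{thm-kAPbounds}. By the remark following that theorem, its exponent $\tfrac{m(1-1/k)-2}{m(1-1/k^2)-2}$ is strictly positive for every pair $(m,k)$ outside the exceptional set (\ref{except}); in particular this holds for $(5,2)$, for $(4,3)$, and for $(3,k)$ with any $k\geq 4$. By the second preliminary remark, we conclude that there are only finitely many $5$-term relatively prime powerful progressions, only finitely many $4$-term relatively prime $3$-full progressions, and, for each fixed $k\geq 4$, only finitely many $3$-term relatively prime $k$-full progressions. Hence $A^\infty(2)\leq 4$, $A^\infty(3)\leq 3$ and $A^\infty(k)\leq 2$ for all $k\geq 4$. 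Combining with the lower bounds of the preceding paragraph gives $A^\infty(2)=4$, $A^\infty(3)=3$ and $A^\infty(k)=2$ for all $k\geq 4$, as claimed.

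Since the corollary is a formal consequence of the two theorems, it presents no genuine obstacle; the only points meriting attention are the routine verification that the parameter choices $(5,2)$, $(4,3)$ and $(3,k)$ with $k\geq 4$ all lie outside (\ref{except}), so that (\ref{fundy}) is nontrivial there, together with the trivial lower bound $A^\infty(k)\geq 2$ for $k\geq 4$. All of the substance is carried by Theorems \ref{thm-kAPbounds} and \ref{thm2}.
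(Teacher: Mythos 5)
Your proof is correct and fills in exactly the routine details that the paper compresses into the phrase ``combining these theorems yields the following as an immediate corollary'': the lower bounds come from Theorem \ref{thm2} (plus the trivial $2$-term $k$-th-power construction for $k\geq 4$), and the upper bounds come from observing that $\gcd(d,N)=1$ in (\ref{fundy}) forces $\max\{d,N\}$ to be bounded whenever the exponent is positive, which you verify holds for $(5,2)$, $(4,3)$ and $(3,k)$ with $k\geq 4$. The truncation remark you include to push the finiteness from one value of $m$ to all larger ones is exactly the right observation to make the deduction airtight.
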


We should note that the cases $(m,k)=(3,2)$ and $(3,3)$ in Theorem \ref{thm2} are quite routine, with the latter being essentially similar to earlier work of Cohn \cite{Coh} and Nitaj \cite{Ni} on the equation $x+y=z$ with $x, y, z$ $3$-full and coprime. Our construction in case $(m,k)=(4,2)$ is somewhat more delicate, and examples appear rather harder to come by. Indeed, the smallest example we know of a four term arithmetic progression of relatively prime squarefull numbers $N, N+d, N+2d, N+3d$  is with 

\begin{myequation}
\begin{array}{l}
N=1754165466244069308202217775999956378086940173344319563172002642821353893414665425909802276610337858551130228714908264 \\
092183795339077334979031700161975796410401371645661431360025397251281677069110162012277002086390746724745549802652454316540 \\
271195895308716314978016592356420740449112045901950803191243774768961934231660971591185887190573577668799607394109591204214 \\
604135648682900998727228226593126369291913667220946462186004173194306398691888158466413460639774157882482959802659110939652 \\
136228618175027376824043417358675378159581584277370148393502133757758008198566804907384712188429439324818835809177011739706 \\
061754762515269091685163078618119624125217884298638064713655378814564438037963281084546897486611052819897337172620291148315 \\
280417277153060964497088448139561749108173430365387040149246930709528091212924112879928417251967934790756742633841051639584 \\
495623456552417965269743585498263726964921039114000025167618337353033324722304307938697243424019540486387146646368242489545 \\
610330490914468389343108667203304362239733168424653279570390475273414115369168262097104720496240973641353670263031347258604 \\
611645652961570313664689
\end{array}
\end{myequation} 
\noindent and
\begin{myequation}
\begin{array}{l}
d=299898780205178410895351168299251416514240403797188258857731555237883704843231133768804957739478188249793937539671317969 \\
1601418926330109568204740736277680144034988467399677276604096892033111741572495408516424831194336534740791609445632167915867 \\
983229641072496202684204455967079877585222902298106764205135133998090972953520383628510708831082703460991606290744685985479 \\
7501394526552285824537988146406729182561111252050397430951389113906511754358596098416711685797387964702456409557198144316354 \\
4576448375185114822454053014407405653855625778505950788783405133132351025230212273281181402784825243367245601342351163938844 \\
7511602603027731160669778975066241789237483758668990872682732905466648286670718351547126005760567836664876228186443297040149 \\
2891437575965927769057874089301626715048200317876370602379876364947749811158867618438548845231108039817421342270142569518358 \\
8676480531310322826968151693434755211455987253377474811167725972237918157785225550208983526158278047706217051627806646721104 \\
9645486094775768146941903865819544168802079854092052035588751598337667447201595945847274679394267855049509300923765353012150 \\
88500325400536.
\end{array}
\end{myequation} 

The outline of this paper is as follows. In Section \ref{Sec-Rad}, we deduce a pair of upper bounds upon the radicals of $k$-full numbers. 
Section \ref{Sec-Id} is devoted to polynomial identities which we will require for our application of the $abc$-conjecture.
In Section \ref{Sec-T1}, we prove Theorem \ref{thm-kAPbounds}, while, in Section \ref{Sec-T2}, we construct families of $m$-term arithmetic progressions of $k$-full number to prove Theorem \ref{thm2}. In Section \ref{Sec-Sharp}, we restrict our attention to the case $k=2$  and construct families of $m$-tuples of squarefull numbers in arithmetic progression with $d$ relatively small compared to $N$, sharpening earlier work of the third author \cite{Chan-2022}. Finally, in Section \ref{Sec7}, we discuss the problem of determining, given $m$, the smallest value of $d$ such that each of $N, N+d, \ldots, N+(m-1)d$ is squarefull, generalizing a conjecture of Erd\H{o}s, Mollin and Walsh \cite{MoWa}.

\section{Upper bounds for radicals} \label{Sec-Rad}

If $n$ is a positive integer, we define the  {\it radical}  of $n$, $\mbox{Rad}(n)$ to be the product of the distinct prime divisors of $n$ (setting $\mbox{Rad}(1)=1$). In this section, our goal is to deduce upper bounds upon radicals of products of $k$-full integers, after certain common factors have been removed. Our first result applies to an individual radical.

\begin{lemma} \label{lem-5AP}
Let $k \geq 2$,  $n$ be a $k$-powerful number and $t$ be any $k$-powerful divisor of $n$. Then
\begin{equation} \label{powpow}
\mbox{Rad} \Bigl( \frac{n}{t} \Bigr) \le \frac{n^{1/k}}{t^{1/k^2}}.
\end{equation}
\end{lemma}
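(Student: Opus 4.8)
The plan is to reduce the inequality to a purely multiplicative statement about prime powers and then verify it prime by prime. Since $n$ is $k$-powerful, we may write its factorization as $n = \prod_{p \mid n} p^{a_p}$ where each exponent satisfies $a_p \geq k$. Similarly, since $t$ is a $k$-powerful divisor, $t = \prod_{p \mid t} p^{b_p}$ with $b_p \geq k$ for every $p \mid t$, and of course $b_p \leq a_p$ whenever $p \mid t$ (and we set $b_p = 0$ for $p \nmid t$, noting that then $t$ having no contribution there is automatic). The radical $\mbox{Rad}(n/t)$ is the product of those primes $p$ for which $a_p > b_p$, i.e. the primes still dividing $n/t$.

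First I would observe that it suffices to prove the corresponding local inequality at each prime: that is, I claim that for each prime $p \mid n$,
\begin{equation*}
p^{[a_p > b_p]} \leq p^{a_p/k - b_p/k^2},
\end{equation*}
where $[a_p > b_p]$ is $1$ if $a_p > b_p$ and $0$ otherwise. Taking the product over all $p \mid n$ then yields $\mbox{Rad}(n/t) \leq n^{1/k} / t^{1/k^2}$ directly. So everything comes down to checking this single-prime estimate. If $a_p = b_p$, the left side is $p^0 = 1$ and the right side is $p^{b_p/k - b_p/k^2} = p^{b_p(k-1)/k^2} \geq 1$ since $b_p \geq 0$, so the inequality holds. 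If $a_p > b_p$, I need $1 \leq a_p/k - b_p/k^2$, equivalently $a_p \geq k + b_p/k$.

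The main (and really only) point to nail down is this last inequality $a_p \geq k + b_p/k$ in the case $a_p > b_p$. Here I would split into two subcases. If $p \nmid t$, then $b_p = 0$ and the requirement is just $a_p \geq k$, which holds because $n$ is $k$-powerful. If $p \mid t$, then $b_p \geq k$, so $a_p > b_p \geq k$ forces $a_p \geq b_p + 1$; it then suffices to check $b_p + 1 \geq k + b_p/k$, i.e. $b_p(1 - 1/k) \geq k - 1$, i.e. $b_p \geq k$, which is exactly the hypothesis that $t$ is $k$-powerful. This closes every case and completes the proof. I do not anticipate any genuine obstacle here; the only thing to be careful about is bookkeeping the case $a_p = b_p$ (including $p \nmid t$ with $a_p = b_p = 0$, which does not arise since we only range over $p \mid n$) so that the empty/trivial contributions are handled correctly.
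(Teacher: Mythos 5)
Your proof is correct and is essentially the same argument as the paper's: reduce to a prime-by-prime inequality on valuations and verify it by casework. The paper cases first on $\nu_p(n)=k$ versus $\nu_p(n)\geq k+1$ and then on the possible values of $\nu_p(t)$, while you case on $a_p=b_p$ versus $a_p>b_p$ and then on $p\mid t$ versus $p\nmid t$; the decisive estimate (in the case $p\mid t$, $a_p>b_p$, that $b_p+1\geq k+b_p/k$ is equivalent to $b_p\geq k$) is the same computation the paper does in the form $\frac{e}{k}-\frac{e-1}{k^2}\geq 1$ for $e\geq k+1$.
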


\begin{proof}
To prove \eqref{powpow}, it suffices to show that
\begin{equation} \label{powpow1}
1 \le \frac{1}{k} \nu_p(n) - \frac{1}{k^2} \nu_p(t) \; \; \text{ for all } \; \; p \mid \frac{n}{t}
\end{equation}
and
\begin{equation} \label{powpow2}
0 \le \frac{1}{k} \nu_p(n) - \frac{1}{k^2} \nu_p(t) \; \; \text{ for all } \; \; p \nmid \frac{n}{t}.
\end{equation}
Note that if $p \mid  n$, then $\nu_p(n) \ge k$ as $n$ is $k$-powerful.

\bigskip

Case 1: $\nu_p(n) = k$. Then, for any $k$-powerful divisor $t$ of $n$, we must have $\nu_p(t) = 0$ or $\nu_p(t) = k$. In the former case, $\nu_p(t) = 0$ and \eqref{powpow1} is satisfied. In the latter case, $p \nmid \frac{n}{t}$. Hence $\nu_p(\frac{n}{t}) = 0$ and \eqref{powpow2} is satisfied.

\bigskip

Case 2: $e := \nu_p(n) \ge k+1$. Then, for any $k$-powerful divisor $t$ of $n$, we must have $\nu_p(t) = 0$, $k \le \nu_p(t) \le e-1$, or $\nu_p(t) = e$. If $\nu_p(t) = 0$, then $p^e \, \| \, \frac{n}{t}, n$ and we have \eqref{powpow1}. If $k \le \nu_p(t) \le e-1$, then $p \mid \frac{n}{t}$ and
\[
\frac{1}{k} \nu_p(n) - \frac{1}{k^2} \nu_p(t) \ge \frac{e}{k} - \frac{e-1}{k^2} = \frac{(k-1)e+1}{k^2} \ge 1,
\]
so that \eqref{powpow1} is satisfied. If $\nu_p(t) = e$, then $p \nmid \frac{n}{t}$ and we have \eqref{powpow2}.

\end{proof}

For our applications, we will actually need to bound the radical of a product of terms. We prove the following.

\begin{lemma} \label{roadie}
Let $k \geq 2$ and $m \geq 3$ be integers, and assume that $m \geq 2k-1$.
Suppose that $N$ and $d$ are positive integers  such that
$$
N, N+d, \ldots, N+(m-1)d
$$
are $m$ $k$-full numbers in arithmetic progression. Choose integers $a_{i,j}$ such that
\begin{equation} \label{stamp}
N+jd =  \prod_{i=k}^{2k-1} (a_{i,j})^i 
\end{equation}
and write $t=\gcd(N,d)$.
Then 
\begin{equation} \label{raddy3}
\mbox{Rad}  \left( \frac{N}{t} \frac{N+d}{t}  \cdots \frac{N+(m-1)d}{t}  \right) 
\leq C_m  \, \frac{\prod_{j=0}^{m-1}  \prod_{i=k}^{2k-1} a_{i,j}}{t^{m/(2k-1)}},
\end{equation}
where
$$
C_m=  \prod_{\stackrel{p \leq m}{p \mbox{\tiny{ prime}}}}  p.
$$
\end{lemma}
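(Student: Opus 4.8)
The plan is to bound $\mathrm{Rad}\bigl(\tfrac{N}{t}\cdots\tfrac{N+(m-1)d}{t}\bigr)$ by splitting it according to how primes distribute among the factors, and to handle small primes (those $\le m$) separately from large primes. First, I would observe that the radical of a product is at most the product of the radicals times a correction factor accounting for primes dividing more than one term. Since consecutive terms of the progression differ by multiples of $d$, a prime $p$ dividing two of the $m$ terms $\tfrac{N+jd}{t}$ must divide the difference of some pair, hence divides $id$ for some $1 \le i \le m-1$; if moreover $p \nmid t$ (which is automatic after dividing out $t$, at least for primes not dividing $t$), then $p \mid i$ for some $i \le m-1$, so $p \le m-1 < m$. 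Thus every prime that is "shared" between distinct factors $\tfrac{N+jd}{t}$ is at most $m$, and the total contribution of such primes is absorbed into $C_m = \prod_{p \le m} p$. This gives
$$
\mathrm{Rad}\left(\prod_{j=0}^{m-1} \frac{N+jd}{t}\right) \le C_m \prod_{j=0}^{m-1} \mathrm{Rad}\!\left(\frac{N+jd}{t}\right).
$$

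Next, for each individual factor I would apply Lemma \ref{lem-5AP}. Writing $N+jd = \prod_{i=k}^{2k-1} a_{i,j}^i$ as in \eqref{stamp} (possible since $N+jd$ is $k$-full), and noting that $t = \gcd(N,d)$ divides $N+jd$, I want to relate $\mathrm{Rad}\bigl(\tfrac{N+jd}{t}\bigr)$ to $\prod_i a_{i,j}$ and a power of $t$. The subtlety is that $t$ itself need not be $k$-full, whereas Lemma \ref{lem-5AP} requires a $k$-full divisor. So I would instead work prime-by-prime: for each prime $p$, let $e = \nu_p(N+jd) \ge k$ (if $p$ divides the term) and $f = \nu_p(t)$ with $0 \le f \le e$; I need to show
$$
\nu_p\!\left(\mathrm{Rad}\left(\tfrac{N+jd}{t}\right)\right) \le \nu_p\!\left(\prod_i a_{i,j}\right) - \frac{1}{2k-1}\,\nu_p(t),
$$
i.e. the left side is $1$ when $e > f$ and $0$ when $e = f$, and we must find $\tfrac{1}{2k-1}f$ units of slack on the right. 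Since $N+jd = \prod_{i=k}^{2k-1}a_{i,j}^i$, the exponent $\nu_p(\prod_i a_{i,j})$ equals the number of indices $i$ with $p \mid a_{i,j}$, which is at least $\lceil e/(2k-1)\rceil \ge e/(2k-1)$ because each $a_{i,j}$ contributes at most $\nu_p(a_{i,j})\cdot i \le \nu_p(a_{i,j})(2k-1)$ to $e$ — wait, more carefully, $e = \sum_i i\,\nu_p(a_{i,j}) \le (2k-1)\sum_i \nu_p(a_{i,j})$, so $\sum_i \nu_p(a_{i,j}) \ge e/(2k-1)$. When $e > f \ge 0$ we then need $e/(2k-1) \ge 1 + f/(2k-1)$, i.e. $e \ge f + (2k-1)$; this requires care when $f$ is close to $e$, but then $p \mid t$ forces $\nu_p(t) \ge k$ for $p$-adically... actually here I would mimic the case analysis of Lemma \ref{lem-5AP}: if $e = f$ then $p \nmid \tfrac{N+jd}{t}$ so the $\mathrm{Rad}$ exponent is $0$ and we just need $e/(2k-1) \ge f/(2k-1)$, true; if $e > f$, then since $t \mid d \mid (N+jd) - N$ etc., one shows $f \le e - 1$ and moreover... the cleanest route is: $t \mid N$ and $t \mid d$, so $t \mid N+jd$, and if $p^f \| t$ with $f \ge 1$ then $p \mid N$ and $p \mid d$, hence $p$ is a prime dividing $N$ which is $k$-full, so actually this constrains the valuations of $t$ at such primes to be... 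I would argue $f \le e$ always and separately verify $e/(2k-1) - f/(2k-1) \ge 1$ whenever $e > f$ by checking $e - f \ge 2k-1$; if $0 < e-f < 2k-1$ one needs the extra input that the $a_{i,j}$ with $i$ not dividing things contribute enough, using $e \ge k \ge$ (something).

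Multiplying the resulting prime-by-prime inequalities over all $p$ yields
$$
\mathrm{Rad}\!\left(\frac{N+jd}{t}\right) \le \frac{\prod_{i=k}^{2k-1} a_{i,j}}{t^{1/(2k-1)}},
$$
and then taking the product over $j = 0, 1, \ldots, m-1$ and combining with the small-prime bound gives exactly \eqref{raddy3} with the stated $C_m$. The main obstacle I anticipate is the careful prime-by-prime bookkeeping in the second step: precisely pinning down the valuation slack $\tfrac{1}{2k-1}\nu_p(t)$ in the boundary regime where $\nu_p(t)$ is nonzero but less than $\nu_p(N+jd)$, and checking that in that regime the number of distinct $a_{i,j}$ divisible by $p$ is large enough — this is the analogue of Case 2 in the proof of Lemma \ref{lem-5AP} and is where the hypothesis $m \ge 2k-1$ (ensuring the exponent range $k \le i \le 2k-1$ is the right one and that $C_m$ captures all shared small primes) gets used.
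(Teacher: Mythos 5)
There is a genuine gap: the per-index inequality you want to multiply over $j$, namely
$$
\mathrm{Rad}\!\left(\frac{N+jd}{t}\right) \le \frac{\prod_{i=k}^{2k-1} a_{i,j}}{t^{1/(2k-1)}},
$$
is false in general, and it cannot be repaired at the level of a single $j$. Here is a concrete counterexample with $k=2$ (so $2k-1=3$ and $i\in\{2,3\}$). Take a prime $p$ with $\nu_p(N)=3$, $\nu_p(d)=2$, so $\nu_p(t)=2$. At $j=0$ we have $e=\nu_p(N)=3>f=\nu_p(t)=2$, so $p$ divides $N/t$ and the left-hand side has $p$-adic valuation $1$. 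On the right, the only way to write $\nu_p(N)=3$ as $2\nu_p(a_{2,0})+3\nu_p(a_{3,0})$ with nonnegative exponents is $\nu_p(a_{2,0})=0$, $\nu_p(a_{3,0})=1$, so $\nu_p(\prod_i a_{i,0})=1$ and the right-hand side has $p$-adic valuation $1-\tfrac{2}{3}=\tfrac13<1$. The slack you were hoping to find in the boundary regime $0<e-f<2k-1$ simply is not there; the extra units of $p$-valuation needed to close the gap live in the \emph{other} terms $N+jd$, $j\ne 0$, not in the $j=0$ factor. (Also, your first step is harmless but pointless: $\mathrm{Rad}$ of a product is always at most the product of $\mathrm{Rad}$s, so $C_m$ does no work there.)

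The paper's proof is structurally different and inherently global. It inserts $C_m$ not as a gcd-correction between the $m$ radicals but to discard primes $p\le m$ entirely; it then fixes a prime $p>m$ dividing the product $\prod_j (N+jd)/t$ and works directly with
$$
M=\frac{\prod_{j=0}^{m-1}\prod_{i=k}^{2k-1}a_{i,j}^{2k-1}}{t^m},
$$
aiming to show $\nu_p(M)\ge 2k-1$. Assuming otherwise, one deduces (after ruling out $(2k-1)\mid\nu_p(t)$) that $\sum_{i=k}^{2k-2}\sum_j(2k-1-i)\nu_p(a_{i,j})\le 2k-3$, so at most $2k-3$ indices $j$ can have any $\nu_p(a_{i,j})\ge1$ with $i<2k-1$; since $m\ge 2k$ (the case $(2k-1)\mid m$ having been disposed of separately), at least three indices $j$ then satisfy $\nu_p(N+jd)\equiv 0\pmod{2k-1}$ and hence $\nu_p(N+jd)>\nu_p(t)$. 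Comparing $\nu_p(N)$, $\nu_p(d)$, $\nu_p(t)$ and taking differences of two such terms forces $p\mid j_2-j_1<m$, contradicting $p>m$. This pigeonhole across all $m$ terms is exactly what your per-$j$ argument lacks, and it is where the hypothesis $m\ge 2k-1$ is genuinely used.
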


\begin{proof}
The fact that one may represent a $k$-full number as a product like (\ref{stamp}) is an immediate consequence of the fact that every integer $\ell \geq k$ may be written as a linear combination of $k, k+1, \ldots, 2k-1$, with nonnegative coefficients.
To begin, notice that $m \equiv 0 \mod{2k-1}$ implies that the quantity
$$
\frac{\prod_{j=0}^{m-1}  \prod_{i=k}^{2k-1} a_{i,j}}{t^{m/(2k-1)}}
$$
is a rational number whose $(2k-1)$-st power is an integer and hence itself an integer.  Suppose that $p$ is a prime with $p > m$ and 
$$
\nu_p  \left( \frac{N}{t} \frac{N+d}{t}  \cdots \frac{N+(m-1)d}{t}  \right)  \geq 1.
$$
Then
\begin{equation} \label{blob}
\nu_p \left( \prod_{j=0}^{m-1}  \prod_{i=k}^{2k-1} a_{i,j}^i \right) - m \nu_p(t) \geq 1.
\end{equation}
If $2k-1 \mid m$, 
$$
\nu_p \left( \frac{\prod_{j=0}^{m-1}  \prod_{i=k}^{2k-1} a_{i,j}}{t^{m/(2k-1)}} \right)
= \frac{1}{2k-1} \nu_p \left(  \frac{\prod_{j=0}^{m-1}  \prod_{i=k}^{2k-1} a_{i,j}^{2k-1}}{t^{m}} \ \right)
$$
and so
$$
\nu_p \left( \frac{\prod_{j=0}^{m-1}  \prod_{i=k}^{2k-1} a_{i,j}}{t^{m/(2k-1)}} \right)
\geq \frac{1}{2k-1} \nu_p \left(  \frac{\prod_{j=0}^{m-1}  \prod_{i=k}^{2k-1} a_{i,j}^{i}}{t^{m}} \ \right) \geq \frac{1}{2k-1} > 0.
$$
It follows that 
$$
\nu_p \left( \frac{\prod_{j=0}^{m-1}  \prod_{i=k}^{2k-1} a_{i,j}}{t^{m/(2k-1)}} \right) \geq 1,
$$
as desired. Next suppose that $2k-1 \nmid m$, so that $m \geq 2k-1$ implies that $m \geq 2k$.
Since
$$
M= \frac{\prod_{j=0}^{m-1}  \prod_{i=k}^{2k-1} a_{i,j}^{2k-1}}{t^{m}} 
$$
is a positive integer, it suffices to prove that, for each prime $p>m$ with (\ref{blob}), we have
$$
\nu_p \left(  M \right) \geq 2k-1.
$$
Suppose that
$$
\nu_p \left( M \right) \leq 2k-2.
$$
From (\ref{blob}), we have $\nu_p(M) \geq 1$.
Notice further that if $2k-1 \mid \nu_p(t)$, then $2k-1 \mid \nu_p(M)$, a contradiction. We may thus write
$$
\nu_p(t)= (2k-1) d_1 + d_0, \; \mbox{ with } \; 1 \leq d_0 \leq 2k-2.
$$
From
$$
\nu_p \left( M  \right) =
\nu_p \left( \prod_{j=0}^{m-1}  \prod_{i=k}^{2k-1} a_{i,j}^i \right) - m \nu_p(t) 
+ \sum_{i=k}^{2k-2} (2k-1-i) \nu_p \left( \prod_{j=0}^{m-1}  a_{i,j} \right),
$$
we thus have
$$
 \sum_{i=k}^{2k-2} \sum_{j=0}^{m-1} (2k-1-i) \nu_p \left( a_{i,j} \right) \leq 2k-3.
 $$
The number of $j$ for which 
$$
\max \{ \nu_p( a_{k,j}), \nu_p(a_{k+1,j}), \ldots,  \nu_p(a_{2k-2,j}) \} \geq 1
$$
is thus at most $2k-3$. Since $m \geq 2k$, it follows that there are at least $3$ values of $j$ for which
$$
\nu_p( a_{k,j}) = \nu_p(a_{k+1,j}) =  \cdots = \nu_p(a_{2k-2,j}) =0,
$$
whereby
$$
\nu_p( N+jd) \equiv 0 \mod{2k-1}.
$$
It follows that
\begin{equation} \label{goop}
\nu_p( N+jd) > \nu_p(t),
\end{equation}
for each such $j$. Notice that if $\nu_p(d) > \nu_p(N)$, then we cannot have $\nu_p( N+jd) > \nu_p(t)$ for any $j$, while  if 
$\nu_p(d) < \nu_p(N)$, then $\nu_p( N+jd) > \nu_p(t)$ implies that $p \mid j$, whence, from $p > m$,  $j=0$, contradicting the existence of three values of $j$ satisfying (\ref{goop}).

We may thus suppose that 
$$
\nu_p(N)=\nu_p(d)=\nu_p(t),
$$
whence the existence of two values  $0 \leq j_1 < j_2  \leq m-1$ satisfying (\ref{goop}) implies that
$$
p^{\nu_p(d)+1}  \mid \left( (N+j_2d) - (N+j_1d) \right) = (j_2-j_1)d,
$$
and so $p \mid j_2-j_1$, whence $p \leq j_2-j_1 < m$, a contradiction.
 
\end{proof}

\section{Identities for consecutive terms in arithmetic progression} \label{Sec-Id}

In this section, we will derive a number of polynomial identities that enable us to package together $m$ $k$-full numbers in arithmetic progression, in such a way as to permit application of the $abc$-conjecture of Masser and Oesterl\'e. What follows is essentially Lemma 8.1 of Shorey and Tijdeman \cite{ST}; we provide its proof for completeness.

For any positive integers $n$ and $\ell$, let ${\scriptscriptstyle\begin{Bmatrix} n \\ \ell \end{Bmatrix}}$ be the number of partitions of $n$ labelled objects into $\ell$ non-empty unlabelled subsets. This is also known as Stirling's number of the second kind. Then, the number of surjections from an $n$-element set to an $\ell$-element set is given by $\ell ! \begin{Bmatrix} n \\ \ell \end{Bmatrix}$. Since the number of functions from an $n$-element set to an $j$-element set is $j^n$, we have
\[
\ell! \begin{Bmatrix} n \\ l \end{Bmatrix} = \sum_{j = 0}^{\ell} (-1)^{\ell - j} \binom{\ell}{j} j^n,
\]
by the inclusion-exclusion principle. In particular, when $n < \ell$,
\begin{equation} \label{zerosum}
\sum_{j = 0}^{\ell} (-1)^{\ell - j} \binom{\ell}{j} j^n = 0 = \sum_{j = 1}^{\ell} (-1)^{j - 1} \binom{\ell}{j} j^n,
\end{equation}
since there is no surjection from an $n$-element set to an $\ell$-element set. Clearly, $\begin{Bmatrix} \ell \\ \ell \end{Bmatrix} = 1$, whence
\begin{equation} \label{lsum}
\sum_{j = 0}^{\ell} (-1)^{\ell - j} \binom{\ell}{j} j^\ell = \ell! \; \; \text{ or } \; \; \sum_{j = 1}^{\ell} (-1)^{j - 1} \binom{\ell}{j} j^\ell = (-1)^{\ell-1} \ell! .
\end{equation}

\begin{lemma} \label{lem-identity}
Let  $\ell \ge 2$ and $d \ge 1$ be integers and set
$$
F_d(X) =  \prod_{\stackrel{1 \le j \le \ell}{j \text{ odd}}}  \bigl( X + j d \bigr)^{\binom{\ell}{j}} - \prod_{\stackrel{0 \le j \le \ell}{j \text{ even}}}  \bigl( X + j d \bigr)^{\binom{\ell}{j}}.
$$
Then $F_d(X) = d^\ell G_d(X)$, where $G_d(X)$ is a binary form in $X$ and $d$ with integer coefficients, of degree $2^{\ell-1}-\ell$, satisfying $G_0(1)=(\ell-1)!$.

\end{lemma}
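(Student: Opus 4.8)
The plan is to prove the statement directly in the polynomial ring $\mathbb{Z}[X,d]$: namely, that $F_d(X)$ is a binary form of degree $2^{\ell-1}$ in the pair $(X,d)$ which is divisible by $d^\ell$, so that $G_d(X):=F_d(X)/d^\ell$ is automatically a binary form of degree $2^{\ell-1}-\ell$, after which one only needs to compute its value at $(X,d)=(1,0)$. To begin, I would note that
\[
\sum_{\substack{1 \le j \le \ell \\ j \text{ odd}}} \binom{\ell}{j} = \sum_{\substack{0 \le j \le \ell \\ j \text{ even}}} \binom{\ell}{j} = 2^{\ell-1},
\]
so each of the two products in the definition of $F_d$ is homogeneous of degree $2^{\ell-1}$ in $X$ and $d$, and hence so is $F_d(X)$.

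To pin down the order of vanishing of $F_d$ at $d=0$, write $P(d)=\prod_{j\text{ odd}}(X+jd)^{\binom{\ell}{j}}$ and $Q(d)=\prod_{j\text{ even}}(X+jd)^{\binom{\ell}{j}}$, factor $X+jd=X(1+jd/X)$, and expand the formal logarithm in $\mathbb{Q}(X)[[d]]$:
\[
\log\frac{P(d)}{Q(d)} = \sum_{j=1}^{\ell}(-1)^{j-1}\binom{\ell}{j}\log\!\Bigl(1+\tfrac{jd}{X}\Bigr) = \sum_{n=1}^{\infty}\frac{(-1)^{n-1}}{nX^n}\,d^n\sum_{j=1}^{\ell}(-1)^{j-1}\binom{\ell}{j}j^n.
\]
Here the $\log X$ terms cancel precisely because the two binomial sums above coincide, and the $j=0$ factor of $Q$ contributes nothing for $n\ge 1$. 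By the second identity in \eqref{zerosum}, the inner sum vanishes for $1\le n\le \ell-1$, so
\[
\log\frac{P(d)}{Q(d)} = \frac{(-1)^{\ell-1}}{\ell X^\ell}\Bigl(\sum_{j=1}^{\ell}(-1)^{j-1}\binom{\ell}{j}j^\ell\Bigr)\, d^\ell + O(d^{\ell+1}) = \frac{(\ell-1)!}{X^\ell}\,d^\ell + O(d^{\ell+1}),
\]
using the second identity in \eqref{lsum}. Exponentiating and multiplying by $Q(d)=X^{2^{\ell-1}}+O(d)$ gives
\[
F_d(X) = P(d)-Q(d) = Q(d)\Bigl(\frac{(\ell-1)!}{X^\ell}d^\ell + O(d^{\ell+1})\Bigr) = (\ell-1)!\,X^{2^{\ell-1}-\ell}\,d^\ell + O(d^{\ell+1})
\]
in $\mathbb{Q}(X)[[d]]$.

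It remains to descend back to $\mathbb{Z}[X,d]$. Since $F_d(X)\in\mathbb{Z}[X][d]$ and the computation above shows that $d^\ell$ divides $F_d(X)$ in $\mathbb{Q}(X)[d]$, carrying out the polynomial division by $d^\ell$ (which is monic in $d$) shows that the quotient $G_d(X)$ in fact lies in $\mathbb{Z}[X][d]=\mathbb{Z}[X,d]$. Homogeneity of $F_d$ then forces $G_d$ to be a binary form of degree $2^{\ell-1}-\ell$, and its lowest-order term in $d$, read off from the last display, is $G_0(X)=(\ell-1)!\,X^{2^{\ell-1}-\ell}$; in particular $G_0\not\equiv 0$ (so the degree is exact) and $G_0(1)=(\ell-1)!$, as claimed.

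The only delicate point is keeping the signs straight when expanding $\log(P/Q)$ and matching the resulting binomial sums against \eqref{zerosum} and \eqref{lsum}; the rest is routine formal manipulation. One could instead avoid power series altogether by showing $\partial_d^{\,r}F_d(X)\big|_{d=0}=0$ for $0\le r\le\ell-1$ directly via Leibniz's rule applied to $P$ and $Q$, but the logarithmic-derivative packaging is cleaner and isolates exactly the identities \eqref{zerosum} and \eqref{lsum} that the preceding discussion was designed to supply.
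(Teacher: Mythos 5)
Your proof is correct and follows essentially the same route as the paper: expand $\log(P/Q)$ as a power series and invoke the identities \eqref{zerosum} and \eqref{lsum} to kill the first $\ell-1$ coefficients and extract $(\ell-1)!$ from the $\ell$-th. The only cosmetic difference is that you work formally in $\mathbb{Q}(X)[[d]]$ and read off the lowest-order coefficient in $d$, whereas the paper substitutes $x=d/N$ and sends $N\to\infty$ to identify the leading coefficient in $X$; by homogeneity these are the same piece of information, and your descent back to $\mathbb{Z}[X,d]$ is a small tidying-up of a step the paper leaves implicit.
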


\begin{proof}
 First, treating $d$ as a variable and expanding $F_d(X)$, one can clearly see that it is a homogeneous polynomial of the form
\begin{equation} \label{homog}
\sum_{i + j = 2^{\ell-1}} c_{i, j} d^{i} X^{j}
\end{equation}
as 
$$
\sum_{1 \le j \le \ell, \; j \text{ odd}} \binom{\ell}{j} = \sum_{0 \le j \le \ell, \; j \text{ even}} \binom{\ell}{j} = 2^{\ell-1}. 
$$
Consider the ratio
\[
R(x) = \frac{\prod_{1 \le j \le \ell, \; j \text{ odd}} \bigl( 1 + j x \bigr)^{\binom{\ell}{j}}}{\prod_{0 \le j \le \ell, \; j \text{ even}} \bigl( 1 + j x \bigr)^{\binom{\ell}{j}}}.
\]
Taking logarithms and using the Taylor series for $\log(1+x)$, we can expand $\log R(x)$ as 
\[
\sum_{j = 1}^{\ell} (-1)^{j-1} \binom{\ell}{j} \sum_{i = 1}^{\infty} (-1)^{i - 1} \frac{(j x)^i}{i} = \sum_{i = 1}^{\infty} (-1)^{i-1} \frac{x^i}{i} \sum_{j = 1}^{\ell} (-1)^{j - 1} \binom{\ell}{j} j^i.
\]
By \eqref{zerosum}, the coefficients of the terms  $x, x^2,  \ldots, x^{\ell-1}$ vanish whence
\[
\log R(x) = (\ell-1)! x^\ell + O_\ell (x^{\ell+1})
\]
by \eqref{lsum}. Exponentiating both sides and using the Taylor series for $e^x$, we find that
\[
R(x) = 1 + (\ell-1)! x^\ell + O_\ell (x^{\ell+1}).
\]
Substituting $x = d / N$ for some large parameter $N > d$, we have
\[
\frac{\prod_{1 \le j \le \ell, \; j \text{ odd}} \bigl( N + j d \bigr)^{\binom{\ell}{j}}}{\prod_{0 \le j \le \ell, \; j \text{ even}} \bigl( N + j d \bigr)^{\binom{\ell}{j}}} = 1 + (\ell-1)! \frac{d^\ell}{N^\ell} + O_\ell \Big( \frac{d^{\ell+1}}{N^{\ell+1}} \Bigr),
\]
i.e.
\[
F_d(N) = (\ell - 1)! N^{2^{\ell-1} - \ell} d^\ell + O_\ell \Bigl( N^{2^{\ell-1} - \ell - 1} d^{\ell+1} \Bigr),
\]
whence
\[
\lim_{N \rightarrow \infty} \frac{F_d(N)}{N^{2^{\ell-1} - \ell}} = (\ell-1)! d^\ell.
\]
Since $F_d(X)$ is a polynomial, it must have degree exactly $2^{\ell-1} - \ell$ in $X$ and leading coefficient $(\ell-1)! d^\ell$. Combining this with \eqref{homog}, we have the lemma.
\end{proof}

\section{Proof of Theorem \ref{thm-kAPbounds}} \label{Sec-T1}

We have our machinery in place to  prove Theorem \ref{thm-kAPbounds}, which depends fundamentally upon the $abc$-conjecture :
\begin{conjecture}[Masser and Oesterl\'e] \label{abc}
If $\epsilon > 0$, then there exists a constant $\kappa(\epsilon)>0$ such that if $a, b$ and $c$ are positive, coprime integers satisfying $a+b=c$, then
$$
c < \kappa(\epsilon) \mbox{Rad}(abc)^{1+\epsilon}.
$$
\end{conjecture}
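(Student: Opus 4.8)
The plan is emphatically \emph{not} to prove Conjecture~\ref{abc}: it is one of the central open problems of Diophantine geometry, and nothing at the present state of the art comes close to the polynomial bound $c \ll_\epsilon \mbox{Rad}(abc)^{1+\epsilon}$. What can be sketched is the shape that any serious attack must take, and where the difficulty sits; in this paper the conjecture is used purely as a hypothesis, and it is applied in exactly that way in the deduction of Theorem~\ref{thm-kAPbounds} below.

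First I would reformulate the statement in geometric language by attaching to a triple $(a,b,c)$ with $a+b=c$ the Frey--Hellegouarch curve $E \colon y^2 = x(x-a)(x+b)$. Its minimal discriminant is $16(abc)^2$ up to controlled powers of $2$, and an application of Tate's algorithm shows that its conductor $N_E$ and $\mbox{Rad}(abc)$ differ multiplicatively by a bounded power of $2$. In this language Conjecture~\ref{abc} becomes the assertion that $|\Delta_{\min}(E)| \ll_\epsilon N_E^{6+\epsilon}$ for Frey curves --- an effective form of Szpiro's conjecture, to which (with the optimal exponent, and after the $c_4$, $c_6$ reformulation of Oesterl\'e) it is in fact equivalent. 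One may equally phrase everything adelically as a bound for the height of $(a:b:c) \in \mathbb{P}^2$ relative to the divisor $\{XYZ = 0\}$; this is the formulation in which the analogy with the function-field case --- where the statement is the theorem of Mason and Stothers --- and with Vojta's conjectures on integral points becomes transparent, and it is the natural starting point for any would-be proof.

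The genuine obstacle is the second step: actually proving the inequality. The only unconditional method that yields anything is the theory of linear forms in logarithms, archimedean and $p$-adic; combined with an argument of Stewart and Yu it gives $\log c \ll \mbox{Rad}(abc)^{1/3 + \epsilon}$, which is exponentially weaker than what is needed, and no refinement of the method is expected to lower the exponent $1/3$ below a fixed positive constant. Mochizuki's inter-universal Teichm\"uller theory has been put forward as a complete proof, but the decisive inequality (``Corollary 3.12'' of that work) has not been accepted by the wider community, and I would not build on it. Absent such an input, the best one can do --- and what is done here --- is to assume Conjecture~\ref{abc}; granting it, the uniformity in $\epsilon$ and the constant $\kappa(\epsilon)$ recorded in the statement are immediate, and we now turn to deducing Theorem~\ref{thm-kAPbounds} from it.
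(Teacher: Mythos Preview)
Your assessment is correct and matches the paper exactly: Conjecture~\ref{abc} is stated as an open conjecture, not proved, and is used solely as a hypothesis in the proof of Theorem~\ref{thm-kAPbounds}. The paper offers no discussion of partial results toward the conjecture, so your additional context on Szpiro, Stewart--Yu, and Mochizuki goes beyond what is needed, but nothing you wrote is mistaken.
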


Let  us
suppose that $m \geq 3$ and $k \geq 2$ are integers and that each of $N, N + d, \ldots, N + (m-1) d$ is a $k$-full number. By Lemma \ref{lem-identity}, with $\ell=m-1$, we have
\begin{equation} \label{kabc0}
\mathop{\prod_{1 \le j \le m-1}}_{j \text{ odd}} \bigl( N + j d \bigr)^{\binom{m-1}{j}} = \mathop{\prod_{0 \le j \le m-1}}_{j \text{ even}} \bigl( N + j d \bigr)^{\binom{m-1}{j}} + d^{m-1} G_d(N).
\end{equation}
Let
\[
t := \text{gcd}(N, d) = \text{gcd}(N, N + d),
\]
which is $k$-full  as both $N$ and $N + d$ are $k$-full. Then, $N_0 = N/t$ and $d_0 = d/t$ are relatively prime and \eqref{kabc0} becomes
\begin{equation} \label{kabc1}
\mathop{\prod_{1 \le j \le m-1}}_{j \text{ odd}} \bigl( N_0 + j d_0 \bigr)^{\binom{m-1}{j}} = \mathop{\prod_{0 \le j \le m-1}}_{j \text{ even}} \bigl( N_0 + j d_0 \bigr)^{\binom{m-1}{j}} +d_0^{m-1} G_{d_0}(N_0).
\end{equation}
Let
\[
D := \text{gcd} \biggl( \mathop{\prod_{1 \le j \le m-1}}_{j \text{ odd}} \bigl( N_0 + j d_0 \bigr)^{\binom{m-1}{j}}, \mathop{\prod_{0 \le j \le m-1}}_{j \text{ even}} \bigl( N_0 + j d_0 \bigr)^{\binom{m-1}{j}} \biggr)
\]
Since $\text{gcd}(d_0, N_0) = 1$, we have $\text{gcd}(d_0, N_0 + i d_0) = 1$ and so
\[
\text{gcd}(N_0 + i d_0, N_0 + j d_0) = \text{gcd}(N_0 + i d_0, (j - i) d_0) = \text{gcd}(N_0 + i d_0, j - i ),
\]
whence
$$
\text{gcd}(N_0 + i d_0, N_0 + j d_0) \leq  |j - i|
$$
for any $1 \le i, j \le m-1$. It follows that 
$$
D \le (m-1)^{(m-1)^2} \ll_m 1. 
$$
Since $t \mid N$ and $t \mid d$, we have $t \mid N, N+d, \ldots, N+ (m-1)d$.

Assume first that $d \leq N$. Then 
\begin{equation} \label{kpow}
\mbox{Rad}(N_0), \; \mbox{Rad}(N_0 + d_0),  \; \cdots,   \; \mbox{Rad}(N_0 + (m- 1) d_0) \ll \frac{N^{1/k}}{t^{1/k^2}},
\end{equation}
by Lemma \ref{lem-5AP}. Now, we apply Conjecture \ref{abc} to  \eqref{kabc1}, after dividing by $D$,
i.e. with
$$
a=\frac{1}{D}  \mathop{\prod_{0 \le j \le m-1}}_{j \text{ even}} \bigl( N_0 + j d_0 \bigr)^{\binom{m-1}{j}}, \; \;
b= \frac{1}{D} d_0^{m-1} G_{d_0}(N_0)
$$
and
$$
c= \frac{1}{D} \mathop{\prod_{1 \le j \le m-1}}_{j \text{ odd}} \bigl( N_0 + j d_0 \bigr)^{\binom{m-1}{j}} ,
$$
to conclude from \eqref{kpow} that
\begin{equation} \label{key}
\frac{N^{2^{m-2}}}{t^{2^{m-2}} D} \ll_{\epsilon, m} \biggl( \Bigl( \frac{N^{1/k}}{t^{1/k^2}} \Bigr)^{m} \cdot \frac{d}{t} \cdot \frac{N^{2^{m-2} - m+1}}{t^{2^{m-2} - m+1}} \biggr)^{1 + \epsilon},
\end{equation}
where $\epsilon > 0$ is arbitrary.
Since $d \leq N$, it follows that
$$
t \gg N^{\frac{m(1-1/k)  - 2}{m(1-1/k^2)  - 2} - \epsilon^\prime},
$$
where $\epsilon^\prime>0$ depends upon $\epsilon, k$ and $m$ (but approaches zero as $\epsilon$ does). 
Using just that $t \leq d$, (\ref{key}) implies that
$$
d \gg N^{\frac{m(1-1/k)  - 1}{m(1-1/k^2)  - 1} - \epsilon^{\prime\prime}},
$$
where $\epsilon^{\prime\prime}$ is suitably small and positive.

If, on the other hand, $d > N$, 
\begin{equation} \label{key2}
\frac{d^{2^{m-2}}}{t^{2^{m-2}} D} \ll_{\epsilon, m} \biggl( \Bigl( \frac{d^{1/k}}{t^{1/k^2}} \Bigr)^{m-1} \cdot \frac{N^{1/k}}{t^{1/k^2}}  \cdot \frac{d}{t} \cdot \frac{d^{2^{m-2} - m+1}}{t^{2^{m-2} - m+1}} \biggr)^{1 + \epsilon},
\end{equation}
and hence
$$
t \gg d^{\frac{m(1-1/k)  - 2}{m(1-1/k^2)  - 2} - \epsilon^\prime},
$$
and
$$
N \gg d^{\frac{m(1-1/k)  +1/k-2}{m(1-1/k^2)  +1/k-2} - \epsilon^{\prime\prime}}.
$$
Renaming $\epsilon$, we thus have inequalities (\ref{fundy}),  (\ref{fundy2})  and  (\ref{fundy3}).

Now suppose that $m \geq 2k-1$. If $d \leq N$, appealing to Lemma \ref{roadie}, we may thus replace inequality (\ref{key}) with
\begin{equation} \label{key3}
\frac{N^{2^{m-2}}}{t^{2^{m-2}} D} \ll_{\epsilon, m} \biggl( \Bigl( \frac{N^{1/k}}{t^{1/(2k-1)}} \Bigr)^{m} \cdot \frac{d}{t} \cdot \frac{N^{2^{m-2} - m+1}}{t^{2^{m-2} - m+1}} \biggr)^{1 + \epsilon},
\end{equation}
while $d > N$ implies that
\begin{equation} \label{key4}
\frac{d^{2^{m-2}}}{t^{2^{m-2}} D} \ll_{\epsilon, m} \biggl( \Bigl( \frac{d^{1/k}}{t^{1/(2k-1)}} \Bigr)^{m-1} \cdot \frac{N^{1/k}}{t^{1/(2k-1)}}  \cdot \frac{d}{t} \cdot \frac{d^{2^{m-2} - m+1}}{t^{2^{m-2} - m+1}} \biggr)^{1 + \epsilon}.
\end{equation}
Arguing as before leads to  (\ref{fundy4}), (\ref{fundy5}) and  (\ref{fundy6}).
This completes the proof of Theorem \ref{thm-kAPbounds}.

\section{Proof of Theorem \ref{thm2}} \label{Sec-T2}

\subsection{The case $(m,k)=(3,2)$}

There are a number of constructions to produce three term arithmetic progressions of coprime squarefull numbers. The simplest is to note that if $X, Y$ and $Z$ are coprime positive integers satisfying $X^2+Y^2=2Z^2$, with $X < Y$, then setting $N=X^2$ and $d = Z^2-X^2$ yields a three term arithmetic progression $N, N+d, N+2d$ of coprime squarefull numbers. A standard factoring argument in $\mathbb{Q}(\sqrt{-1})$ implies that all such solutions $X, Y, Z$ can be parametrized via
$$
X=a^2-b^2 \pm 2ab, \; Y= a^2-b^2 \mp 2ab \; \mbox{ and } \; Z = a^2+b^2,
$$
where $a$ and $b$ are integers with $\gcd (a,b)=1$,  and $a$ and $b$ have opposite parity.

\subsection{The case $(m,k)=(3,3)$}

Suppose that we have a solution to
\begin{equation} \label{triple}
X^3+Y^3=2 \cdot 3^4 Z^3,
\end{equation}
with $X$ and $Y$ coprime and distinct, and $Z \neq 0$. Note that this guarantees that $X \equiv -Y \mod{3}$. 
 Let $X_0=X$, $Y_0 = Y$, and, generally, for $i \geq 0$, set
$$
X_{i+1}  = X_i (X_i^3+2Y_i^3), \; \; Y_{i+1} = -Y_i (2X_i^3+Y_i^3)
$$
and
$$
 Z_{i+1} =  Z_i (X_i-Y_i)(X_i^2+X_iY_i+Y_i^2).
$$
It follows that $(X_{i+1}, Y_{i+1}, Z_{i+1})$ satifies equation (\ref{triple}). Also,
$$
|Z_{i+1}| = |X_i-Y_i| (X_i^2+X_iY_i+Y_i^2)  |Z_i| > |Z_i|.
$$
Suppose that $p$ is a prime with the property that $p \mid \gcd(X_1,Y_1)$. Since $X$ and $Y$ are coprime and odd, it follows that $p \mid X^3+2Y^3$ and $p \mid 2X^3+Y^3$, so that 
$p =3$, contradicting the fact that $X$ and $Y$ are coprime with  $X \equiv -Y \mod{3}$. We thus have
$\gcd (X_1, Y_1) =1$. Starting from the solution $X=37, Y=17, Z=7$ thus yields infinitely many distinct coprime solutions to equation  (\ref{triple}). 

To ensure that we have infinitely many such solutions with each of $X, Y$ and $Z$ positive (so that if we choose $N=X^3$ and $d=3^4 Z^3-X^3$, then $N, N+d$ and $N+2d$ are coprime, positive and cubefull), we must be slightly careful. We claim that at least one of the triples
$$
\pm (X_{i+h},Y_{i+h},Z_{i+h}), \; \mbox{ for } \; 0 \leq h \leq \log |X_i|
$$
consists entirely of positive integers. If this fails to be true for $(X_i,Y_i,Z_i)$ and $(-X_i,-Y_i,-Z_i)$, we may, without loss of generality (after possibly replacing $(X_i,Y_i,Z_i)$ with one of $(Y_i,X_i,Z_i)$, $(-X_i,-Y_i,-Z_i)$ or $(-Y_i,-X_i,-Z_i)$) suppose that $X_i > 0$, $Y_i < 0$ and $Z_i > 0$.
If, further, we have $X_i > \sqrt[3]{2} |Y_i| $, then $X_{i+1}$ and $Y_{i+1}$ are both positive. If  $X_i > 0$, $Y_i < 0$ and we have $X_i < \sqrt[3]{2} |Y_i| $, then, since $X_i^3 +Y_i^3 > 0$, it follows that $X_{i+1}<0$, and $Y_{i+1}>0$. Write
$$
u = \frac{Y_i}{X_i} = -1 +\delta, \; \mbox{ where } \; \frac{1}{|X_i|} \leq  \delta < 1-2^{-1/3}.
$$
Then
$$
\left| \frac{Y_{i+1}}{X_{i+1}} \right|  = |u| \cdot \left| \frac{u^3+2}{2u^3+1} \right|
$$
and, via the Mean Value Theorem, we can write
$$
\frac{u^3+2}{2u^3+1} = -1 - \delta_1, \; \mbox{ where } \; \delta_1 > 9 \delta.
$$
It follows that
$$
\left| \frac{Y_{i+1}}{X_{i+1}} \right|  = (1-\delta) (1+\delta_1) > (1-\delta)(1+9 \delta)  > 1+(9 \cdot 2^{-1/3}-1) \delta
$$
and so
$$
\left| \frac{Y_{i+1}}{X_{i+1}} \right|   > 1+6 \delta.
$$
Switching the roles of $X_{i+1}$ and $Y_{i+1}$ and iterating this argument, after $h$ iterations, either we have already found a pair of $X_{i+j}$ and  $Y_{i+j}$ with the same sign, $2 \leq j \leq h$, or  we have that $X_{i+h}> 0$, $Y_{i+h} < 0$ and 
$$
\left| \frac{X_{i+h}}{Y_{i+h}} \right|   > 1+6^h \delta.
$$
Since $\delta \geq  \frac{1}{|X_i|}$, if $h \geq \frac{\log |X_i|}{\log 6}$, it follows that
$$
\left| \frac{X_{i+h}}{Y_{i+h}} \right|   > 2 > \sqrt[3]{2},
$$
so that $X_{i+h+1}$ and $Y_{i+h+1}$ are both positive, as desired.

\subsection{The case $(m,k)=(4,2)$}

Let us write $N=N_0^2$ and $d=4d_0$, where we assume that $\gcd(N_0,6d_0)=1$. If we suppose that 
$$
N+d = N_0^2+4d_0 = r^2
$$
and
$$
N+2d = N_0^2+8d_0 = s^2,
$$
for integers $r$ and $s$,
then
$$
N_0^2+ s^2 = 2 r^2.
$$
We parametrize the solutions to this via
$$
N_0=a^2-b^2+2ab, \; s= a^2-b^2-2ab \; \mbox{ and } \; r = a^2+b^2,
$$
where we assume that $\gcd (a,b)=1$ and that $a$ and $b$ are of opposite parity. It follows that
$$
d_0 = ab (b^2-a^2)
$$
and so
$$
N+3d = N_0^2+12 d_0 = a^4 - 8a^3b + 2a^2 b^2 +8ab^3+b^4 = F(a,b).
$$

Our goal now is to show that there exist infinitely many coprime integers $a$ and $b$ of opposite parity for which $F(a,b)=73^3 c^2$, with $c$ an integer. To achieve this, we will first suppose that $F(a,b)=73z^2$ with $b \neq 0$. Writing $X=a/b$ and $Y=z/b^2$, it follows that
$$
X^4-8X^3+2X^2+8X+1=73 Y^2
$$
Noting that $F(2,-1)=73$, we write $X=-2 + \frac{1}{u}$, so that
$$
\frac{u^4 (X^4-8X^3+2X^2+8X+1)}{73} = u^4 - \frac{128}{73} u^3 + \frac{74}{73} u^2 - \frac{16}{73} u + \frac{1}{73}.
$$
The right-hand-side here is  
$$
\left(u^2 - \frac{64}{73} u + \frac{653}{73^2} \right)^2 + \left( - \frac{1680}{73^3} u  - \frac{37392}{73^4}  \right),
$$
whence taking $u=S/T$, $x=2 \cdot 73^2 T$ and $y=4 \cdot 73^3S$, it follows that
\begin{equation} \label{main-ell}
E \; \; : \; \; y^2 - 128xy - 3360y = x^3 - 2612 x^2 + 149568 x.
\end{equation}

Tracing this back to our original variables $a$ and $b$, we find that 
$$
\frac{y}{x} = 146u = \frac{146}{X+2}  = \frac{146}{a/b+2} = \frac{146b}{a+2b}.
$$
Our elliptic curve $E$  has $a$-invariants
$$
a_1= -128, \; a_2 = - 2612, \;  a_3= - 3360, \; a_4= 149568 \mbox{ and } a_6=0,
 $$
 and
$$
E(\mathbb{Q}) \cong \mathbb{Z}_2 \times \mathbb{Z}_2 \times \mathbb{Z}^2,
$$
with generators
$$
T_1= (-1176,  -73 \cdot 1008), \; T_2= (-300, -73 \cdot 240), 
$$
$$
P_1=(-976,  -49344) 
$$
and
$$
 P_2 = (-408, -30192).
$$

The smallest example we know with $F(a,b)=73^3 c^2$ and $c$ an integer is with $N$ and $d$ as given in the introduction, corresponding to the point
$$
P=14P_1-8P_2+T_1,
$$
where we have
$$
\begin{array}{c}
a=144921248310429651263484981703141139178769570556975502152668794143 \\
48005218241845175889247190278032641984251634277516443172330761983511 \\
75740111653955442577917805739155655813704091550263684508211384663921 \\
34833495031762207983301017434999503435028566544782377465850347794312 \\
258030799611
\end{array}
$$
and
$$
\begin{array}{c}
b=155947482664129442288576782709265033399180164546518342682672792886 \\
84237483873605039526561817244982673971927674062409153422746705211107 \\
18555020289971310406261959541458048923588962496661949596895481299529 \\
92257451377480920702977544294258857755793515667657642515368155441381 \\
225265260358.
\end{array}
$$

Write, for a positive integer $n$, 
$$
nP_1 = \left( \frac{\phi_n}{\psi_n^2}, \frac{\Omega_n}{\psi^3_n} \right).
$$
Then, on our original quartic,
we have
$$
\frac{2b}{a+2b} = \frac{\Omega_n}{73 \psi_n \phi_n}.
$$
Taking $x=-976$, we may define
$$
\psi_0 =0, \; \psi_1=1 \; \mbox{ and } \psi_2 =   2^5 \cdot 5 \cdot 11 \cdot 13,
$$
$$
\psi_3= 3 x^4+b_2 x^3+3b_4 x^2+3b_6 x+b_8,
$$
where
$$
b_2=a_1^2+4a_2= 5936, \; b_4=2a_4+a_1a_3 = 729216,
$$
$$
b_6=a_3^2= 11289600  \mbox{ and }  b_8=-a_1a_3a_4+a_2a_3^2-a_4^2 = - 116185227264.
$$
Thus
$$
\psi_3 =  -861920436224 = - 2^{13} \cdot 105214897.
$$
Finally,
$$
\frac{\psi_4}{\psi_2} =  2 x^6+b_2 x^5+5b_4 x^4+10b_6 x^3 + 10 b_8 x^2 +(b_2b_8-b_4b_6) x+ (b_4b_8-b_6^2).
$$
We generate $\psi_n$ for $n \geq 5$ via the recursion
\begin{equation} \label{recur}
\psi_{m+n} \psi_{m-n} = \psi_{m+1} \psi_{m-1} \psi_n^2-  \psi_{n+1} \psi_{n-1} \psi_m^2.
\end{equation}

We also have
$$
\phi_n = -976 \psi_n^2 - \psi_{n-1} \psi_{n+1}
$$
and
$$
\Omega_n = \frac{1}{2 \psi_n} \left( \psi_{2n} + \psi_n^2 \left( 128 \phi_n + 3360 \psi_n^2 \right)  \right).
$$

We compute that $\psi_n$ is periodic modulo $73$ with period $2628 = 36 \cdot 73$. 
 From the fact that $\phi_n$ and $\Omega_n$ satisfy
 $$
\phi_n =  -976 \psi_n^2-\psi_{n-1} \psi_{n+1} 
$$
and
$$
\Omega_n = \frac{1}{2 \psi_2} \left( \psi_{n+2} \psi_{n-1}^2 - \psi_{n-2} \psi_{n+1}^2 \right)
+\psi_n  \left( 64 \phi_n + 1680 \psi_n^2 \right),
$$
valid for $n \geq 1$ and $n \geq 2$, respectively,
 we may check that $\phi_n$ is periodic modulo $73$ with period length $1314=18 \cdot 73$, while $\Omega_n$ is periodic  mod $73$ with period length $876=12 \cdot 73$.
A short computation then reveals that
$$
\psi_n \phi_n \equiv 2 \Omega_n \mod{73} \; \mbox{ and } \; 73 \nmid \Omega_n,
$$
precisely when $n \equiv 39 \mod{73}$. For such $n$, we have
$$
 \frac{\psi_n \phi_n}{\Omega_n} \equiv 2 \mod{73},
 $$
whence
$$
\frac{a}{b} = \frac{146 \psi_n \phi_n}{\Omega_n}-2 \equiv 290 \mod{73^2}.
$$
The factorization of $X^4 -8 X^3+2 X^2 +8X+1$ as
$$
 (X-290)(X-2738)(X-2896)(X-4742) \mod{73^2}
$$
thus implies that 
$$
F(a,b) = a^4-8a^3b+2a^2b^2+8ab^3+b^4 \equiv 0 \mod{73^2},
$$
 and hence, since $F(a,b)=73z^2$, $73 \mid z$, i.e. $F(a,b)=73^3 c^2$ for some integer $c$.
 
It remains then to show that there are infinitely many indices $n \equiv 39 \mod{73}$ for which $a$ and $b$ are of opposite parity. Since
$$
\frac{a}{b} = \frac{146 \psi_n \phi_n}{\Omega_n}-2,
$$
in order to have $a$ even and $b$ odd, it suffices to show that
\begin{equation} \label{dancer}
\nu_2 ( \Omega_n ) \leq \nu_2 ( \psi_n )  + \nu_2 ( \phi_n ).
\end{equation}

We begin by proving the following result.
\begin{proposition} \label{goodies}
If $k$ is a positive integer, we have
$$
\nu_2(\psi_{4k+1}) = 13 k(2k+1), \; \; \nu_2(\psi_{4k-1}) = 13 k(2k-1), 
$$
$$
\nu_2(\psi_{4k+2}) = 26 k(k+1)+5
$$
and
$$
\nu_2(\psi_{4k})  = 
\left\{
\begin{array}{cl}
26k^2+4 &  \mbox{ if $k$ is odd } \\
26k^2+5 &  \mbox{ if $k \equiv 2 \mod{4}$} \\
\geq 26k^2+6  &  \mbox{ if $4 \mid k.$} \\
\end{array}
\right.
$$
\end{proposition}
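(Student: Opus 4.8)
The four formulas are proved simultaneously, by strong induction on $n$, exploiting the fact that $(\psi_n)$ is an elliptic divisibility sequence. (Because $E$ has additive reduction at $2$ and the point $P_1$ reduces to the singular point of $E$ modulo $2$, the standard $p$-adic analysis of division polynomials does not apply here, so one is forced to argue directly.) Besides the recursion (\ref{recur}), the tools needed are its two ``duplication'' specializations,
$$
\psi_{2m+1}=\psi_{m+2}\,\psi_m^3-\psi_{m-1}\,\psi_{m+1}^3, \qquad
\psi_2\,\psi_{2m}=\psi_m\bigl(\psi_{m+2}\,\psi_{m-1}^2-\psi_{m-2}\,\psi_{m+1}^2\bigr),
$$
obtained from (\ref{recur}) by taking the index pair there to be $(m+1,m)$ and $(m+1,m-1)$ respectively. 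Since each expresses a term of index roughly $2m$ in terms of terms of index at most $m+2$, together they provide a genuine descent for $n\ge 5$; the base cases are $\psi_1=1$, $\psi_2$, $\psi_3$ (whose $2$-adic valuations $0$, $5$, $13$ are already recorded) and $\psi_4$ (computed from the displayed formula for $\psi_4/\psi_2$, with $\nu_2(\psi_4)=30$), all of which agree with the asserted formulas.

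In the inductive step one writes $n=2m+1$ or $n=2m$ and substitutes the valuations of $\psi_{m-2},\ldots,\psi_{m+2}$ provided by the inductive hypothesis --- organized by the residue of $m$ modulo $8$, and by the three cases for $\nu_2(\psi_{4k})$ --- into the relevant identity. The crucial point is that, in every residue class of $n$ modulo $8$ other than $n\equiv 0$, the two monomials on the right-hand side turn out to have \emph{distinct} $2$-adic valuations; hence the valuation of their difference is the smaller of the two, and a short computation identifies this with the predicted quantity. (The coarse lower bound $\nu_2(\psi_{4k})\ge 26k^2+6$ for $4\mid k$ suffices throughout, since in these cases it only ever bounds the non-dominant monomial.) For example, when $n=8k+1$ one uses $\psi_{8k+1}=\psi_{4k+2}\psi_{4k}^3-\psi_{4k-1}\psi_{4k+1}^3$, where the second monomial has valuation $104k^2+26k$ while the first has strictly larger valuation, yielding $\nu_2(\psi_{8k+1})=104k^2+26k=13(2k)\bigl(2(2k)+1\bigr)$.

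The case $8\mid n$ is where the argument really bites, since there the two monomials collide. Writing $n=8k$ and applying $\psi_2\,\psi_{8k}=\psi_{4k}\bigl(\psi_{4k+2}\psi_{4k-1}^2-\psi_{4k-2}\psi_{4k+1}^2\bigr)$, the inductive hypothesis gives that $\psi_{4k+2}\psi_{4k-1}^2$ and $\psi_{4k-2}\psi_{4k+1}^2$ both have $2$-adic valuation exactly $78k^2+5$, so a priori one gets only $\nu_2$ of the bracket $\ge 78k^2+6$. When $k$ is even this already yields the proposition: since $\nu_2(\psi_{4k})\ge 26k^2+5$ in that range, $\nu_2(\psi_{8k})\ge(26k^2+5)+(78k^2+6)-5=104k^2+6=26(2k)^2+6$, exactly the inequality claimed for $\psi_{4K}$ with $4\mid K$. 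But when $k$ is odd the proposition asserts the equality $\nu_2(\psi_{8k})=104k^2+5=26(2k)^2+5$, so one must rule out any further cancellation, i.e. show the bracket has valuation exactly $78k^2+6$.

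This last point is the main obstacle, and the way I would handle it is to strengthen the induction so that it also tracks, for $n\not\equiv 0\mod{8}$, the residue of the odd part $\psi_n/2^{\nu_2(\psi_n)}$ modulo $8$. Since an odd integer $a$ satisfies $a^2\equiv 1\mod{8}$ and $a^3\equiv a\mod{8}$, the two duplication identities propagate this refined datum just as cleanly as they propagate the valuations; moreover, modulo $2^{78k^2+7}$ the two monomials in the bracket above are congruent to $2^{78k^2+5}$ times the odd part of $\psi_{4k+2}$ and of $\psi_{4k-2}$, respectively. Hence the required non-cancellation reduces to the single assertion --- to be established as part of the strengthened induction --- that the odd parts of $\psi_{4k+2}$ and $\psi_{4k-2}$ are incongruent modulo $4$ whenever $k$ is odd. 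Granting it, the bracket has valuation $78k^2+6$, and since $\nu_2(\psi_{4k})=26k^2+4$ for $k$ odd we conclude $\nu_2(\psi_{8k})=(26k^2+4)+(78k^2+6)-5=104k^2+5$. The only real work is the bookkeeping in the strengthened induction --- keeping several residues modulo $8$ in play across the whole case split --- while conceptually nothing beyond (\ref{recur}) and its two specializations enters.
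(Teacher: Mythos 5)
You correctly identify the inductive skeleton, the base cases, and the key obstruction — that for $n\equiv 8\pmod{16}$ the duplication identity $\psi_2\psi_{n}=\psi_{n/2}\bigl(\psi_{n/2+2}\psi_{n/2-1}^2-\psi_{n/2-2}\psi_{n/2+1}^2\bigr)$ produces two monomials with equal $2$-adic valuation. But the paper does not resolve this by tracking odd parts. Writing $4k+4=16j+8$ (i.e. $k\equiv 1\pmod 4$) it instead applies the recursion (\ref{recur}) with $(m,n)=(8j+8,8j)$, so that
$$
\psi_{16j+8}\,\psi_8=\psi_{8j+9}\,\psi_{8j+7}\,\psi_{8j}^2-\psi_{8j+1}\,\psi_{8j-1}\,\psi_{8j+8}^2.
$$
Since $8j$ and $8j+8$ fall into opposite sub-cases of the even-index formula — one $\equiv 8\pmod{16}$ (exact valuation $26K^2+5$), the other $\equiv 0\pmod{16}$ (lower bound $\ge 26K^2+6$) — the two monomials automatically have \emph{distinct} valuations and the exact valuation of $\psi_{16j+8}$ drops out, with no information about odd parts needed at all. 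The residue class $n\equiv 0\pmod{16}$ is then treated by the companion identity $\psi_{16j}\psi_8=\psi_{8j+5}\psi_{8j+3}\psi_{8j-4}^2-\psi_{8j-3}\psi_{8j-5}\psi_{8j+4}^2$, where both terms tie, but there only the inequality is claimed, so the tie is harmless.

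Your proposed route, by contrast, leaves the decisive step — that the odd parts of $\psi_{4k+2}$ and $\psi_{4k-2}$ are incongruent modulo $4$ for $k$ odd — asserted but not established, and the assurance that the duplication identities ``propagate this refined datum just as cleanly'' is not automatic. For example, taking $m=4k+2$ with $k\equiv 2\pmod 4$ in $\psi_2\psi_{2m}=\psi_m(\psi_{m+2}\psi_{m-1}^2-\psi_{m-2}\psi_{m+1}^2)$ gives bracket-monomial valuations $78k^2+78k+30$ and $78k^2+78k+31$: a gap of only $1$. The non-dominant monomial therefore influences the odd part modulo $8$, and worse, it contains the factor $\psi_{4k}$ with $4k\equiv 0\pmod 8$, exactly the class you have chosen not to track. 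So the strengthened induction as sketched does not close; one would either have to track odd parts for all indices (problematic for $16\mid n$, where even $\nu_2$ is not pinned down) or introduce some further device. The paper's shift of the splitting index sidesteps all of this bookkeeping, which is why its argument is shorter and self-contained.
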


\begin{proof}
We use induction
and identity (\ref{recur}).
We have
$$
\nu_2 (\psi_2) = 5, \; \nu_2 (\psi_3) = 13, \; \nu_2 (\psi_4) = 30, \;  \nu_2 (\psi_5) = 39,  \;  \nu_2 (\psi_6) = 57,
$$
$$
\nu_2 (\psi_7) = 78, \; \nu_2 (\psi_8) = 109, \; \nu_2 (\psi_9) = 130, \;  \nu_2 (\psi_{10}) = 161,  \;  \nu_2 (\psi_{11}) = 195,
$$
$$
\nu_2 (\psi_{12}) = 238, \; \nu_2 (\psi_{13}) = 273, \; \nu_2 (\psi_{14}) = 317, \;  \nu_2 (\psi_{15}) = 364
$$
and
$$
\nu_2 (\psi_{16}) = 422,
$$
by direct computation. 

Suppose, then, that we have our desired conclusion for, given $k \geq 3$, all $n \leq 4k+1$.  Consider first 
$$
\psi_{4k+\kappa} \psi_{4k-\kappa} = \psi_{4k+1} \psi_{4k-1} \psi_\kappa^2-  \psi_{\kappa+1} \psi_{\kappa-1} \psi_{4k}^2,
$$
for $\kappa \in \{ 2, 3, 5 \}$. We have
$$
\nu_2 \left( \psi_{4k+1} \psi_{4k-1} \psi_\kappa^2 \right) = 13 k(2k+1) + 13 k(2k- 1) + 2 \nu_2(\psi_\kappa) = 52k^2 + 2 \nu_2(\psi_\kappa),
$$
while
$$
\nu_2 \left(\psi_{\kappa+1} \psi_{\kappa-1} \psi_{4k}^2 \right) \geq 52k^2+8+ \nu_2(\psi_{\kappa+1} ) + \nu_2(\psi_{\kappa-1} ).
$$
For each $\kappa \in \{ 2, 3, 5 \}$, we find that 
$$
8+ \nu_2(\psi_{\kappa+1} ) + \nu_2(\psi_{\kappa-1} ) > 2 \nu_2(\psi_\kappa),
$$
whence
$$
\nu_2 \left( \psi_{4k+\kappa} \right) =52k^2 + 2 \nu_2(\psi_\kappa) - \nu_2 \left( \psi_{4k-\kappa} \right).
$$
Substituting  $\kappa=2$, $\kappa=3$ and $\kappa=5$, respectively, we thus have
$$
\nu_2 \left( \psi_{4k+2} \right) =52k^2 + 10 -  \left( 26k(k-1)+5 \right) = 26 k(k+1)+5,
$$
$$
\nu_2 \left( \psi_{4k+3} \right) =52k^2 + 26-  \left( 13(k-1)(2k-1)  \right) = 13 (k+1)(2k+1)
$$
and
$$
\nu_2 \left( \psi_{4k+5} \right) =52k^2 + 78 -  \left(  13 (k-1)(2k-3) \right) = 13 (k+1)(2k+3),
$$
in each case as desired.

It remains, then, to prove that $\nu_2(\psi_{4k+4}) =  26(k+1)^2+4$, if $k$ is even, while $\nu_2(\psi_{4k+4}) =   26(k+1)^2+5$, if $k \equiv 1 \mod{4}$, and $\nu_2(\psi_{4k+4}) \geq   26(k+1)^2+6$, if $k \equiv -1 \mod{4}$.
We appeal first to (\ref{recur}) 
with $m=4k+1, n=3$ to find that
$$
\psi_{4k+4} \psi_{4k-2} = \psi_{4k} \psi_{4k+2} \psi_3^2-  \psi_{4} \psi_{2} \psi_{4k+1}^2.
$$
If $k$ is even, 
$$
\nu_2 \left(  \psi_{4k} \psi_{4k+2} \psi_3^2 \right) \geq  26k^2+5 + 26 k(k+1)+5 + 26 = 52 k^2 +26k + 36,
$$
while
$$
\nu_2 \left(  \psi_{4} \psi_{2} \psi_{4k+1}^2 \right) =  52 k^2 +26k + 35,
$$
whence
$$
\nu_2 \left(  \psi_{4k+4} \right) = 26k^2+52k+30,
$$
as desired.
If, on the other hand, $k$ is odd, 
$$
\nu_2 \left(  \psi_{4k} \psi_{4k+2} \psi_3^2 \right) = 26k^2+4 + 26 k(k+1)+5 + 26 = 52 k^2 +26k + 35
$$
and
$$
\nu_2 \left(  \psi_{4} \psi_{2} \psi_{4k+1}^2 \right) = 30 + 5 + 26 k(2k+1) = 52k^2+26k+35,
$$
whence
$$
\nu_2 \left(  \psi_{4k+4} \right) \geq 52k^2+26k+36 - (26 k(k-1)+5) = 26k^2+52k+31.
$$
If $k \equiv 1 \mod{4}$, we can write $k=4j+1$ for integer $j$, so that $4k+4= 16j+8$, and 
$$
\psi_{16j+8} \psi_{8} = \psi_{8j+9} \psi_{8j+7} \psi_{8j}^2-  \psi_{8j+1} \psi_{8j-1} \psi_{8j+8}^2.
$$
We have
$$
\nu_2 \left(   \psi_{8j+9} \psi_{8j+7} \psi_{8j}^2 \right) =208(j+1)^2+ 2 \nu_2(\psi_{8j})
$$
and
$$
\nu_2 \left(   \psi_{8j+1} \psi_{8j-1} \psi_{8j+8}^2 \right) =208j^2+ 2 \nu_2(\psi_{8j+8}).
$$
Since $j$ and $j+1$ have opposite parity, it follows that 
$$
\nu_2 \left(  \psi_{4k+4} \right) = 26k^2+52k+31,
$$
as desired. Finally, if $k \equiv -1 \mod{4}$, say $k=4j-1$, we have $4k+4=16j$ and consider
$$
\psi_{16j} \psi_{8} = \psi_{8j+5} \psi_{8j+3} \psi_{8j-4}^2-  \psi_{8j-3} \psi_{8j-5} \psi_{8j+4}^2.
$$
We have
$$
\nu_2 \left(  \psi_{8j+5} \psi_{8j+3} \psi_{8j-4}^2 \right)= 52 (2j+1)^2 + 52 (2j-1)^2+8
$$
and
$$
\nu_2 \left(  \psi_{8j-3} \psi_{8j-5} \psi_{8j+4}^2 \right)= 52 (2j+1)^2 + 52 (2j-1)^2+8,
$$
whence 
$$
\nu_2 \left(  \psi_{4k+4} \right) \geq 26k^2+52k+32,
$$
again as claimed. 
\end{proof}

We will prove that inequality (\ref{dancer}) holds for all $n \equiv 4 \mod{16}$. Write  $n=16k+4$. We apply Proposition \ref{goodies} to find that
$$
\nu_2 ( \psi_{n-1} \psi_{n+1} )=52(4k+1)^2,
$$
while
$$
\nu_2 (976 \psi_n^2 ) > 52(4k+1)^2,
$$
so that, from $\phi_n =  -976 \psi_n^2-\psi_{n-1} \psi_{n+1}$,
$$
\nu_2 (\phi_n) = 52(4k+1)^2.
$$
We also find that
$$
\nu_2(\psi_n) = 26 (4k+1)^2+4 \; \mbox{ and } \; \nu_2(\psi_{2n}) = 26 (8k+2)^2+5,
$$
whence
$$
\nu_2 \left( 64 \psi_n \phi_n \right) = 78 (4k+1)^2+10,
$$
$$
\nu_2 \left( 1680 \psi_n^3 \right) = 78 (4k+1)^2+16
$$
and
$$
\nu_2 \left( \frac{\psi_{2n}}{2 \psi_n} \right) = 78 (4k+1)^2.
$$
It follows from
$$
\Omega_n = \frac{1}{2 \psi_n} \left( \psi_{2n} + \psi_n^2 \left( 128 \phi_n + 3360 \psi_n^2 \right)  \right)
$$
that
$$
\nu_2 \left( \Omega_n \right) = 78 (4k+1)^2,
$$
so that
$$
\nu_2 \left( \frac{\Omega_n}{ \psi_n \phi_n} \right) = -4,
$$
which implies (\ref{dancer}).

We have thus deduced the following.

\begin{proposition} \label{main-prop}
Define an elliptic curve $E$ via (\ref{main-ell}) and let
$$
P_1=(-976,  -49344) 
$$
be a point on $E$. Write, for a positive integer $n$, 
$$
nP_1 = \left( \frac{\phi_n}{\psi_n^2}, \frac{\Omega_n}{\psi^3_n} \right).
$$
Suppose that $a$ and $b$ are relatively prime integers satisfying
$$
\frac{a}{b} = \frac{146 \psi_n \phi_n}{\Omega_n}-2,
$$
where
$$
n \equiv 404 \mod{16 \cdot 73}.
$$
If we define
$$
N = \left( a^2-b^2+2ab \right)^2 \; \mbox{ and } \; d= 4 ab (b^2-a^2),
$$
then
$$
N, N+d, N+2d, N+3d
$$
are relatively prime squarefull numbers. More specifically, there exist infinitely many quadruples of relatively prime integers $x, y, z$ and $w$ such that
$$
N=x^2, \; N+d=y^2, \; N+2d=z^2 \; \mbox{ and } \; N+3d=73^3 w^2.
$$
\end{proposition}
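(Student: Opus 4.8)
The plan is to assemble the ingredients built up over the course of this subsection. I would begin by recording the algebraic identities underlying the construction: with $N_0 = a^2-b^2+2ab$ and $d_0 = ab(b^2-a^2)$, so that $N = N_0^2$ and $d = 4d_0$, a direct expansion gives
$$
N_0^2 + 4 d_0 = (a^2 + b^2)^2, \qquad N_0^2 + 8 d_0 = (a^2 - b^2 - 2ab)^2, \qquad N_0^2 + 12 d_0 = F(a,b).
$$
Hence, setting $x = a^2-b^2+2ab$, $y = a^2+b^2$ and $z = a^2-b^2-2ab$, we have $N = x^2$, $N+d = y^2$, $N+2d = z^2$ and $N+3d = F(a,b)$; in particular $N$, $N+d$ and $N+2d$ are squarefull (being perfect squares), and it remains to show that $N+3d = F(a,b)$ is squarefull, that the four terms are pairwise coprime, and that infinitely many genuinely distinct such progressions arise.

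Next I would invoke the hypothesis $n \equiv 404 \mod{16 \cdot 73}$, which by the Chinese Remainder Theorem amounts to the pair of congruences $n \equiv 39 \mod{73}$ and $n \equiv 4 \mod{16}$. The first is exactly the residue class treated above: for such $n$ one has $73 \nmid \Omega_n$ and $a/b \equiv 290 \mod{73^2}$, and since $290$ is a root of $X^4-8X^3+2X^2+8X+1$ modulo $73^2$, reducing $F(a,b)$ modulo $73^2$ (legitimate because $73 \nmid b$, which follows from $73 \nmid \Omega_n$) yields $F(a,b) \equiv 0 \mod{73^2}$. On the other hand, tracing the point $nP_1$ back through the birational map to the quartic $X^4-8X^3+2X^2+8X+1 = 73 Y^2$ produces a rational point with $X = a/b$, so that $F(a,b) = 73 (Y b^2)^2$; since $F(a,b) \in \mathbb{Z}$ and $73$ is prime, the rational number $Y b^2$ is forced to be an integer $z_0$, and then $73^2 \mid F(a,b) = 73 z_0^2$ gives $73 \mid z_0$. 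Thus $N+3d = F(a,b) = 73^3 w^2$ with $w = z_0/73 \in \mathbb{Z}$, and since every prime dividing $73^3 w^2$ does so to exponent at least $2$, $N+3d$ is squarefull.

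The second congruence $n \equiv 4 \mod{16}$ is precisely the case in which Proposition~\ref{goodies} and the $2$-adic computation following it give $\nu_2(\Omega_n) = \nu_2(\psi_n) + \nu_2(\phi_n) - 4$, i.e., (\ref{dancer}) with room to spare; writing $a/b = (146 \psi_n \phi_n - 2\Omega_n)/\Omega_n$ and comparing $2$-adic valuations of numerator and denominator (using $146 = 2 \cdot 73$) shows that, in lowest terms, $a$ is even and $b$ is odd. With $\gcd(a,b) = 1$ and $a, b$ of opposite parity, elementary considerations give that $N_0$ is coprime to $6$ and that any common odd prime of $N_0$ and $d_0$ would divide one of $a$, $b$, $a-b$, $a+b$, hence both $a$ and $b$; thus $\gcd(N_0, d_0) = 1$ and so $\gcd(N,d) = 1$. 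Since $6 \mid d$ (as $\gcd(a,b) = 1$ forces $3 \mid ab(b^2-a^2)$) while $N$ is coprime to $6$, each $N+id$ is coprime to $6$ and to $d$; and as the four terms lie in arithmetic progression, $\gcd(N+id, N+jd) = \gcd(N+id,(j-i)d) = \gcd(N+id, j-i)$ for $0 \le i < j \le 3$, which is therefore $1$ in every case. Finally, none of the forms $x$, $y$, $z$, $F(a,b)$ has a rational zero, so all four terms are positive; and as $n$ runs over the residue class the points $nP_1$ have unbounded height, $P_1$ being of infinite order, which forces $\max(|a|,|b|) \to \infty$ and hence $N \to \infty$, yielding infinitely many distinct quadruples $(x,y,z,w)$.

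The substantive inputs here — Proposition~\ref{goodies}, together with the determination of the periods of $\psi_n$, $\phi_n$, $\Omega_n$ modulo $73$ — are already in place, so the remaining work is largely bookkeeping. The one point deserving real care is the integrality step in the second paragraph: one must check that the rational square root occurring in the quartic parametrisation, evaluated at $X = a/b$, actually yields an integer $z_0$ with $F(a,b) = 73 z_0^2$, so that the congruence $F(a,b) \equiv 0 \mod{73^2}$ can be upgraded to $F(a,b) = 73^3 w^2$; this is where the integrality of $F(a,b)$ and the primality of $73$ enter, alongside $73 \nmid b$, which is what makes the reduction modulo $73^2$ meaningful.
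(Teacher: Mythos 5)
Your proposal is correct and follows the paper's approach: the algebraic identities reduce the problem to the elliptic curve $E$, the congruence $n\equiv 39\pmod{73}$ (via the periods of $\psi_n,\phi_n,\Omega_n$ mod $73$) forces $a/b\equiv 290\pmod{73^2}$ and hence $F(a,b)=73^3w^2$, and the congruence $n\equiv 4\pmod{16}$ together with Proposition~\ref{goodies} yields inequality~(\ref{dancer}), giving $a$ even and $b$ odd. Your coprimality argument is a worthwhile addition: the paper \emph{assumes} $\gcd(N_0,6d_0)=1$ at the start of the construction, and you supply the (short but genuinely needed) verification that $\gcd(a,b)=1$ with $a$ even, $b$ odd actually forces $\gcd(N_0,6d_0)=1$, hence pairwise coprimality of $N$, $N+d$, $N+2d$, $N+3d$.
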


\section{Sharpness of Theorem  \ref{thm-kAPbounds} : $k=2$} \label{Sec-Sharp}

For simplicity, for the remainder of this paper, we will focus our attention on the case $k=2$, i.e. on arithmetic progressions of squarefull aka powerful numbers. 
In \cite{Chan-2022}, the third author defines
$$
\theta_m = \liminf \left\{ \frac{\log d}{\log N}  :  N, d \in \mathbb{N}, \; N, N+d, \ldots, N+(m-1)d \mbox{ squarefull} \right\}
$$
and proves that, under the assumption of the $abc$-conjecture, one has
$$
\theta_3=\frac{1}{2}, \; \; \frac{1}{2}  \leq \theta_4 \leq \frac{9}{10}  \; \mbox{ and } \;  \frac{1}{2}  \leq \theta_m \leq 1- \frac{1}{10 \cdot 3^{m-5}},
$$
for $m \geq 5$.
In this section, we will  sharpen this result for all $m \geq 4$.
\begin{proposition} \label{thet}
Assuming the $abc$-conjecture, if $m \geq 4$ is an integer, then
$$
\frac{3m-6}{4m-6} \leq \theta_m \leq \frac{2m-4}{2m-3}.
$$
\end{proposition}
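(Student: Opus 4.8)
The plan is to prove the two inequalities separately; the left‑hand one is essentially immediate from Theorem~\ref{thm-kAPbounds}, while the right‑hand one calls for an explicit construction.

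\smallskip

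\noindent\textit{The lower bound.} Since $m\ge 4\ge 2k-1$ for $k=2$, inequality (\ref{fundy5}) applies, and on setting $k=2$ its exponent becomes
$$
\frac{m(1-1/k)-1}{m\bigl(1-1/(2k-1)\bigr)-1}=\frac{m/2-1}{2m/3-1}=\frac{3m-6}{4m-6}.
$$
Hence, for every $\epsilon>0$, any squarefull arithmetic progression $N,N+d,\ldots ,N+(m-1)d$ satisfies $d\gg_{\epsilon,m}N^{\frac{3m-6}{4m-6}-\epsilon}$; taking logarithms and letting $N\to\infty$ along pairs $(N,d)$ approaching the liminf yields $\theta_m\ge\frac{3m-6}{4m-6}$. (Note that $\frac{3m-6}{4m-6}>\frac{2m-4}{3m-4}$ for $m\ge 3$, so this is stronger than what (\ref{fundy2}) would give.)

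\smallskip

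\noindent\textit{The upper bound.} I would exhibit an infinite one‑parameter polynomial family of squarefull progressions $N,N+d,\ldots ,N+(m-1)d$ in which $N$ has degree $2m-3$ and $d$ has degree $2m-4$ in the parameter $T$. The two facts driving this are that $T^{2m-5}$ is squarefull for every integer $T$ (as $2m-5\ge 3$), and that a product of squarefull integers is squarefull. Writing $N=T^{2m-3}=T^{2m-5}\cdot T^{2}$ and $d=T^{2m-5}e$, one has $N+jd=T^{2m-5}\bigl(T^{2}+je\bigr)$, so it suffices to choose the step $e$ so that the $m-1$ numbers $T^{2}+e,\;T^{2}+2e,\;\ldots ,\;T^{2}+(m-1)e$ are all squarefull (the term $j=0$ being $T^{2m-3}$, which is squarefull automatically). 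Provided this can be arranged with $e$ of order $T$, one obtains $\log d/\log N\to\frac{(2m-5)+1}{2m-3}=\frac{2m-4}{2m-3}$, and, since $\theta_m\ge\frac12$ always, a step of order $(T^{2})^{1/2}=T$ is the smallest this factorisation could support. To produce such a sub‑progression of squarefull numbers of size $\asymp T^{2}$ with common difference only $\asymp T$ — smaller than a ``generic'' $(m-1)$‑term progression of squarefull numbers permits once $m\ge 5$ — I would exploit the freedom to choose $T$ with prescribed arithmetic structure: force all but three of the $T^{2}+je$ to be perfect squares by building in common square factors, and control the remaining three through the rational parametrisation of three‑term progressions of squares by the conic $X^{2}+Z^{2}=2Y^{2}$ together with an auxiliary elliptic curve of positive Mordell--Weil rank forcing those terms to be squarefull infinitely often — exactly the mechanism used for the case $(m,k)=(4,2)$ in Section~\ref{Sec-T2}, but now with the conic parametrised near its degenerate locus so that the resulting common difference is of order $T$ rather than of order $T^{2}$ (cf.\ the value $\theta_3=\frac12$ of \cite{Chan-2022}). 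Given such a family, passing to logarithms gives $\theta_m\le\frac{2m-4}{2m-3}$.

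\smallskip

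\noindent\textit{The main obstacle.} The delicate point is producing infinitely many members of the family uniformly in $m$: for $m\ge 5$ several ``this term is squarefull'' conditions must be imposed simultaneously on a single one‑dimensional family, and one needs the resulting curve to be rational, or to be an elliptic curve carrying a visibly non‑torsion rational point — which will likely require, in place of the single modulus $73$ of Section~\ref{Sec-T2}, an $m$‑uniform choice of congruential data together with (possibly) a recursive construction that adjoins a term by multiplying an existing family through by a suitable perfect power and re‑solving. A secondary technical point is checking that the sizes genuinely balance, i.e.\ that the engineered sub‑progression can be taken with common difference of order exactly $T$ (so that the exponent is $\frac{2m-4}{2m-3}$ and not something strictly larger) and that the resulting numbers are honestly squarefull with $N\to\infty$.
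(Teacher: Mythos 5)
Your lower bound is exactly the paper's argument: inequality (\ref{fundy5}) with $k=2$ gives the exponent $\frac{m/2-1}{2m/3-1}=\frac{3m-6}{4m-6}$, and passing to the liminf yields $\theta_m\ge\frac{3m-6}{4m-6}$. That part is correct and complete, as is your side remark that this improves on what (\ref{fundy2}) alone would give.

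The upper bound is where the gap lies, and you in fact flag it yourself. What you sketch is a construction in which $N=T^{2m-3}$, $d=T^{2m-5}e$ with $e\asymp T$, and you then need the $m-1$ numbers $T^{2}+je$, $1\le j\le m-1$, to be simultaneously squarefull. You propose to handle all but three by imposing perfect-square factors and the last three via the conic $X^2+Z^2=2Y^2$ plus an elliptic curve of positive rank, on the model of the $(m,k)=(4,2)$ case from Section~\ref{Sec-T2}. You then acknowledge (``the main obstacle'') that you do not know how to make this uniform in $m$. That is precisely where the proof is missing: no curve is exhibited, no uniform congruence scheme is given, and the method you point to needed delicate $2$-adic and mod-$73$ analysis even for a single value of $m$.

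The paper instead proves the stronger, fully explicit Theorem~\ref{4-term} by an elementary construction that avoids elliptic curves entirely. One takes $x=y_{3\cdot 5^{J}-1}$ from the Pell recursion $x_k+\sqrt2\,y_k=(1+\sqrt2)^k$, which guarantees both that $2x^{2}+1$ is a perfect square and (Lemma~\ref{Pelly}) that $5^{J+1}\mid x^{2}+1$; one then packs the factors $t_i^{2}$ (where $t_i$ is essentially $2x^{2}+i$ for $3\le i\le m-1$) and $\bigl((x^{2}+1)/5^{J+1}\bigr)^{2}$ into $d$, and sets $N=2x^{2}d$, so that $N+id=d\,(2x^{2}+i)$ is squarefull term by term: the $i=0,1,2$ cases come from $8x^{2}$, the Pell square, and the $5$-power, while for $i\ge 3$ the factor $t_i^{2}\mid d$ is promoted to $t_i^{3}$. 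This gives $d\ll x^{4m-8}/(\log x)^{2}$, $N\asymp 2x^{2}d$, hence $\log d/\log N\to\frac{2m-4}{2m-3}$ along the family. So while your degree bookkeeping is consistent with the paper's, the mechanism you reach for is not the one used, and the part of your argument that would actually produce the family is not supplied.
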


We note that the lower bound is an immediate consequence of Theorem  \ref{thm-kAPbounds}, specifically inequality (\ref{fundy5}), and depends upon the 
 $abc$-conjecture, while the upper bound is unconditional. To deduce these upper bounds, we will actually prove the following stronger result.

\begin{theorem} \label{4-term}
Let $m \geq 4$. Then there exist infinitely many positive integers $d$ and $N$ such that each of
$$
N, N+d, \ldots, N+(m-1)d
$$
is squarefull, and 
$$
d \ll N^{\frac{2m-4}{2m-3}} (\log N) ^{\frac{-2}{2m-3}}.
$$
\end{theorem}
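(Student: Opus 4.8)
The plan is to produce, for every $m\ge 4$, an explicit family of pairs $(N,d)$ depending on an integer parameter. The feature that makes this possible, and that was not available in Section~\ref{Sec-T2}, is that we no longer require the terms $N+jd$ to be pairwise coprime; we are thus free to multiply an entire progression through by a large common squarefull number, which can repair ``defects'' in individual terms.

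First I would reduce to a polynomial problem. Suppose $h_0(x),\dots,h_{m-1}(x)\in\mathbb{Z}[x]$ and $c_0,\dots,c_{m-1}$ are positive integers such that the polynomials $c_jh_j(x)^2$ form an arithmetic progression, say $c_jh_j(x)^2=P(x)+jQ(x)$ for $0\le j\le m-1$, with $\deg Q<\deg P$. For an integer $n$ lying in a suitable fixed residue class --- chosen by the Chinese Remainder Theorem so that $\mbox{Rad}(c_j)\mid h_j(n)$ for each $j$ --- the number $c_jh_j(n)^2$ is squarefull: this divisibility is exactly what makes the product of a perfect square with the (not necessarily squarefull) integer $c_j$ squarefull. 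Setting $N=P(n)$ and $d=Q(n)$ then gives $m$ squarefull numbers in arithmetic progression with $d\ll N^{\deg Q/\deg P}$, the implied constant depending on the family. The congruences on $n$ are finitely many and simultaneously solvable as soon as each $h_j$ has a root modulo each prime dividing $c_j$, which one arranges along the way; the substantive requirement is to make $\deg Q$ as small as possible relative to $\deg P$ while retaining $m$ terms.

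For this I would begin from the classical parametrisation of three squares in arithmetic progression used in Section~\ref{Sec-T2}, namely the identity $(a^2-b^2-2ab)^2+(a^2-b^2+2ab)^2=2(a^2+b^2)^2$, and build upward, using the freedom with common factors to adjoin at each stage a further translate which is a squarefull constant times a perfect square --- allowing some terms, when convenient, to be cubes rather than squares of polynomials, which are still squarefull. One arranges, through this iteration, that $N$ and $d$ are polynomially bounded in a single parameter $T$ with $N\asymp T^{2m-3}$ and $d\asymp T^{2m-4}$, so that $d\ll N^{(2m-4)/(2m-3)}$. To extract the extra logarithmic saving I would take the common squarefull factor to be not a fixed constant but a slowly growing primorial $\prod_{p\le y}p$ with $y$ of size $(\log\log T)/(m-2)$; by the prime number theorem $\prod_{p\le y}p=e^{(1+o(1))y}\asymp(\log T)^{1/(m-2)}\asymp(\log N)^{1/(m-2)}$, so $N$ is enlarged by a factor $\asymp(\log N)^{1/(m-2)}$ while $d$ is unchanged, and since $(2m-4)/(m-2)=2$ the estimate refines to $d\ll N^{(2m-4)/(2m-3)}(\log N)^{-2/(2m-3)}$.

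The main obstacle I anticipate is the construction of the length-$m$ family itself. A crude parameter count shows that a \emph{single} polynomial identity of the above form with $\deg Q$ only one less than $\deg P$ cannot persist for large $m$ --- at bottom this is the classical impossibility of four squares in arithmetic progression reasserting itself --- so one must genuinely iterate, manufacturing an $m$-term example from shorter ones. The delicate point is to keep the degrees of $P$ and $Q$, and hence the sizes of $N$ and $d$, growing only linearly in $m$, rather than exponentially or doubly-exponentially as in the construction of \cite{Chan-2022}; this, together with checking at each stage that the residue conditions which turn the weighted squares (and cubes) into genuine squarefull numbers remain mutually compatible, is where the effort concentrates. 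The remaining ingredients --- the polynomial identities and the primorial estimate --- are routine.
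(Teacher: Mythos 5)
Your high-level framework---reduce to a polynomial identity $c_j h_j(T)^2 = P(T) + jQ(T)$, impose finitely many congruences on the parameter so that each $c_j h_j(T)^2$ becomes squarefull, and aim for $\deg Q/\deg P = (2m-4)/(2m-3)$---is reasonable in spirit, and those target degrees are in fact what the paper's construction achieves. But there are two substantive problems.

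First, the construction of the $m$-term family is not actually produced; you acknowledge that this ``is where the effort concentrates,'' but that is precisely the theorem. The paper's construction is concrete and does not require any iterative build-up from the three-term identity: one takes $x$ with $2x^2+1$ a perfect square (a solution of the Pell equation $y^2 - 2x^2 = 1$) and sets $N+jd = d(2x^2+j)$ for $0 \le j \le m-1$, where $d = 4(\text{perfect square})$ is chosen to contain $(2x^2+j)^2$ or $\bigl((2x^2+j)/2\bigr)^2$ for each $3 \le j \le m-1$ and to contain $(x^2+1)^2$ up to a controlled power of $5$. Then $N = 2x^2 d$, the terms $j=0,1$ are squarefull because $2d$ and $2x^2+1$ are, the term $j=2$ is handled by the $x^2+1$ factor, and the terms $j\ge 3$ are squarefull because $d(2x^2+j)$ contains $(2x^2+j)^3$ or $2^3\bigl((2x^2+j)/2\bigr)^3$. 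No induction on $m$ and no four-squares obstruction arises, because the progression is not one of squares but of a fixed squarefull $d$ times small integers.

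Second, the logarithmic refinement as you describe it cannot work. You propose to multiply the whole progression by a primorial $C\asymp (\log N)^{1/(m-2)}$ so that ``$N$ is enlarged \ldots\ while $d$ is unchanged.'' Multiplying a progression by $C$ replaces $(N,d)$ by $(CN,Cd)$; with $\alpha = (2m-4)/(2m-3) < 1$ this sends $d/N^\alpha$ to $(d/N^\alpha)\,C^{1-\alpha}$, which is \emph{larger}, not smaller. There is no operation on the progression that scales $N$ and leaves $d$ fixed. The paper obtains the saving by moving in the opposite direction: it chooses the Pell solution $x = y_{3\cdot 5^J - 1}$ so that $5^{J+1} \mid x^2+1$ with $5^{J+1} \gg \log x$ (Lemma~\ref{Pelly}), and then \emph{divides} $5^{2(J+1)}$ out of both $N$ and $d$; the term $N+2d = 2d(x^2+1)$ still has high enough $5$-adic valuation because of the extra $(x^2+1)$ factor. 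Dividing both $N$ and $d$ by $C \asymp (\log N)^2$ reduces $d/N^\alpha$ by $C^{1-\alpha} = (\log N)^{2/(2m-3)}$, which is exactly the claimed gain. The arithmetic in your proposal is internally consistent with its (impossible) premise, but the mechanism is the wrong way round.
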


\begin{proof}
Let us begin by defining integers $x_k$ and $y_k$ via
$$
x_k+ \sqrt{2} y_k = (1 +  \sqrt{2})^k,
$$
for nonnegative integer $k$. We will have need of the following result.
\begin{lemma} \label{Pelly}
If $j$ is a nonnegative integer and $k=3 \cdot 5^j \pm 1$,
we have 
\begin{equation} \label{pell}
y_k^2+1 \equiv 0 \mod{5^{j+1}}.
\end{equation}
\end{lemma}
\begin{proof} (of Lemma \ref{Pelly})
Begin by noting that if $j$ is a nonnegative integer, then
\begin{equation} \label{tupper}
y_{3 \cdot 5^j} \equiv 0 \mod{5^{j+1}}.
\end{equation}
To see this, apply induction in $j$, starting from the observation that $y_3=5$.  If we assume that (\ref{tupper}) holds, then, writing
$$
A= (1+ \sqrt{2})^{3 \cdot 5^j}  \; \mbox{ and } B = (1- \sqrt{2})^{3 \cdot 5^j} ,
$$
we have that 
$$
y_{3 \cdot 5^{j+1}} = \frac{A^5-B^5}{2 \sqrt{2}} = \frac{A-B}{2 \sqrt{2}} (A^4+A^3B+A^2B^2+AB^3+B^4)
$$
and hence
$$
y_{3 \cdot 5^{j+1}} = y_{3 \cdot 5^{j}} (A^4+A^3B+A^2B^2+AB^3+B^4).
$$
It remains then to show that
\begin{equation} \label{fiver}
A^4+A^3B+A^2B^2+AB^3+B^4 \equiv 0 \mod{5},
\end{equation}
a consequence of the fact that 
$$
A^4+A^3B+A^2B^2+AB^3+B^4 \equiv (A-B)^4 \mod{5}.
$$

To complete the proof of Lemma \ref{Pelly}, we note that, for any integer $k$, we have
$$
y_{2k-1} y_{2k+1} - y_{2k}^2 = 1.
$$
Appealing to this identity with $k=\frac{3 \cdot 5^j \pm 1}{2}$, (\ref{tupper}) thus implies (\ref{pell}), as desired.
\end{proof} 

Now let $m \geq 4$ be an integer and set
$$
x=y_{3 \cdot 5^j - 1},
$$
so that $2x^2+1$ is square, and we have
$$
x^2+1 \equiv 0 \mod{5^{j+1}}.
$$
For $3 \leq j \leq m$, define
$$
t_j = 2^{-\delta} \left( 2x^2+j \right),
$$
where $\delta = 0$ if $j$ is odd and $\delta=1$ if $j$ is even. Choose
$$
d= 2^2  \left( \prod_{j=3}^{m-1} t_j^2 \right)  \left( \frac{x^2+1}{5^{j+1}} \right)^2 
$$
and
$$
N= 2x^2 d,
$$
so that 
$$
N+jd= d(2x^2+j)
$$
is squarefull for each $0 \leq j \leq m-1$.
Since, for each $k$, we have
$$
y_k = \frac{(1+\sqrt{2})^k - (1-\sqrt{2})^k}{2 \sqrt{2}},
$$
it follows that
$$
x< \frac{(1+\sqrt{2})^{3 \cdot 5^j-1}}{2 \sqrt{2}}
$$
and hence 
$$
\log x < 2.65 \cdot 5^j < 5^{j+1}.
$$
Since $t_j \leq 2x^2+m$, for each $j$,  we 
thus have 
$$
d \ll \frac{x^{4(m-2)}}{\log^2 x}
$$
and so
$$
\frac{d}{N^{\frac{2m-4}{2m-3}}}  = d^{\frac{1}{2m-3}} \,  (2x^2)^{-\frac{2m-4}{2m-3}} \ll (\log x) ^{\frac{-2}{2m-3}} \ll (\log N) ^{\frac{-2}{2m-3}},
$$
i.e.
$$
d \ll N^{\frac{2m-4}{2m-3}} (\log N) ^{\frac{-2}{2m-3}}.
$$
This completes the proof of Theorem \ref{4-term}.
\end{proof}

\subsection{The case $m=3$} In  \cite{Chan-2022}, the third author asks whether ``it is possible to
construct infinitely many [three-term arithmetic progressions]  of powerful numbers with common difference $d=o(N^{1/2})$''? In this subsection, we will give a construction of a family of examples which, conjecturally at least, achieve this bound.
 Suppose that
$$
d=4 d_0 \; \mbox{ and } \; N=16N_0^2.
$$
In order  to have
$$
N+d=2^2 \cdot 3^3 \cdot t^2 \; \mbox{ and } N+2d = 2^3 \cdot s^2,
$$
with $s$ and $t$  integers,
we require that
$$
4 N_0^2 + d_0 = 3^3 t^2 \; \mbox{ and } \; 2N_0^2+d_0 = s^2,
$$
so that
\begin{equation} \label{crucial}
2N_0^2+s^2=3^3 t^2.
\end{equation}
We thus have
$$
s+\sqrt{-2} N_0 = (1+\sqrt{-2})^3 (u+\sqrt{-2}v)^2.
$$
Expanding,
$$
s+\sqrt{-2} N_0 = \left( -5 + \sqrt{-2} \right) \left(u^2 + 2 \sqrt{-2}u v -2v^2 \right),
$$
so that
$$
s= -5 u^2-4uv+10v^2
$$
and
$$
N_0=u^2-10uv -2v^2.
$$
It follows from $d_0 = s^2-2N_0^2$  that $d_0=F (u,v)$, where
$$
F(u,v)=23u^4 + 80u^3v - 276u^2v^2 - 160uv^3 + 92v^4,
$$
with corresponding roots $\theta_i$ to $F(x,1)=0$ given by
$$
\theta_1 = -5.4279\ldots, \; \theta_2= -0.8296\ldots, \; \theta_3=0.3684\ldots, \; \theta_4= 2.4107\ldots
$$
and discriminant $2^{20} \cdot 3^{18}$.
In order to make 
$$
d_0=23 (u-\theta_1v)(u-\theta_2v)(u-\theta_3v)(u-\theta_4v)
$$
as small as possible, relative to 
$$
|N_0| = \left| u^2-2v^2-10uv \right|,
$$
it thus suffices to choose  $u/v=p_k/q_k$  a convergent in the infinite simple continued fraction expansion to $\theta_i$ for some $i$.
By way of example,  if $u/v=p_k/q_k$ is a convergent to $\theta_3$, say, with corresponding partial quotient  $a_{k+1} \geq 60$ (so that $v \geq 19$), we have
$$
\left| \theta_3 - \frac{u}{v} \right| < \frac{1}{a_{k+1} v^2},
$$
whence
$$
|d_0| <  23.01 (\theta_3-\theta_1)(\theta_3-\theta_2)(\theta_4-\theta_3) \frac{v^2}{a_{k+1}},
$$
while
$$
|N_0| = \left| u^2-2v^2-10uv \right| > 0.99 \, (10 \theta_3-\theta_3^2+2 ) v^2.
$$
We thus have
$$
\frac{\sqrt{N}}{d} =\frac{|N_0|}{|d_0|} > \frac{0.99 (10 \theta_3-\theta_3^2+2)}{23.01 (\theta_3-\theta_1)(\theta_3-\theta_2)(\theta_4-\theta_3)} a_{k+1}.
$$
A short calculation reveals that, in order to have $d < \sqrt{N}$,  it thus suffices that $a_{k+1} \geq 60$. This happens  for
$$
k=5, 30, 122, 140, 206, 309, \ldots
$$

Sadly, while we would like to say that each $\theta_i$ has unbounded partial quotients (or, at the very least, infinitely many exceeding $60$), such a conclusion is currently not provable for even a single algebraic number of degree exceeding $2$. Conjecturally, however, the folklore belief is that such numbers behave in this respect like ``typical'' real numbers. In particular, their partial quotients are conjectured to satisfy a Gauss-Kuzmin distribution, and hence to be unbounded. To be more precise,
for almost all real numbers
 $\alpha$, by work of Philipp \cite{Phi},  if we define
$$
T_n(\alpha) = \max \{ a_\ell \; : \; 1 \leq \ell \leq n \},
$$
we have 
$$
\liminf_{n \rightarrow \infty} \frac{T_n(\alpha) \log \log n}{n} = \frac{1}{\log 2}.
$$
We thus expect that
$$
a_{k+1} \gg \frac{\log u}{\log \log \log u}
$$
infinitely often, which would imply that
$$ 
d = o \left(\sqrt{N} \right)
$$
infinitely often. More precisely, if any of the $\theta_i$ behaves like ``almost all'' real numbers regarding the partial quotients in their simple continued fraction expansions, then our construction provides infinitely many triples of squarefull numbers $N, N+d, N+2d$ with
$$ 
d \ll \frac{\sqrt{N} \log \log \log  N}{\log N}.
$$

\subsection{Computations}

If we simply enumerate all squarefull numbers up to $10^{12}$, say, then a brute-force search for examples of three term arithmetic progressions with $d < \sqrt{N}$ uncovers the following examples.

$$
\begin{array}{c|c|c}
d & N & \frac{\log d}{\log N} \\ \hline
2^2 \cdot 79 & 2^3 \cdot 3^6 \cdot 5^3 & 0.4263 \ldots \\
2^2 \cdot 11 \cdot 419 & 2^6 \cdot 3^2 \cdot 19^3 \cdot 47^2 & 0.4291 \ldots \\
2^2 \cdot 71 \cdot 647 & 2^2 \cdot 23^2 \cdot 11087^2 &  0.4611 \ldots \\
5^2 \cdot 13^2 \cdot 41 & 3^3  \cdot 5^2 \cdot 13^3 \cdot  317^2 & 0.4688 \ldots \\
2^2 \cdot 29^2 & 2^5 \cdot 5^2 \cdot 7^2 \cdot 29^2 & 0.4691 \ldots \\
2^2 \cdot 5^2 \cdot 41 \cdot 79 & 2^3 \cdot 5^2 \cdot 41761^2 & 0.4773 \ldots \\
2^2 \cdot 3^2 & 2^6 \cdot 3^3 & 0.4807 \ldots \\
3 \cdot 11^2 & 11^2 \cdot  37^2 & 0.4904 \ldots \\
2^2 \cdot 3 \cdot 1801 & 2^4 \cdot 6269^2 & 0.4926 \ldots \\
2^2 \cdot 1871 & 2^4 \cdot 2003^2 & 0.4962 \ldots \\
\end{array}
$$
Here, we are only listing ``primitive'' examples, i.e. those with $d$ minimal for fixed $d/N$. Analogous computation in case $m=4$ (where we observe that we do not know precisely the value $\theta_4$, only that $3/5 \leq \theta_4 \leq 4/5$) reveals the following examples with $ \frac{\log d}{\log N} < 0.7426$.

$$
\begin{array}{c|c|c}
d & N & \frac{\log d}{\log N} \\ \hline
139932 & 22358700 & 0.7001 \ldots \\
372100 & 90048200 & 0.7003 \ldots \\
10404 & 499392 & 0.7049 \ldots \\
744200 & 180096400 & 0.7112 \ldots \\
419796 & 67076100 & 0.7184 \ldots \\
6084 & 146016 & 0.7327 \ldots \\
127756 & 8821888 & 0.7352 \cdots \\
323276393476 & 3168108656064800 & 0.7425 \ldots \\
\end{array}
$$

As Theorem \ref{thm-kAPbounds} indicates, at least under the assumption of the $abc$-conjecture, if $m \geq 4$, then, additionally, $d$ cannot be too big relative to $N$. In case $m=4$,  the primitive pairs $d, N$ with $\frac{\log d}{\log N}$ largest that we know are as follows.

$$
\begin{array}{c|c|c}
d & N & \frac{\log d}{\log N} \\ \hline
129665228 & 21316 & 1.8741 \ldots \\
1083676 & 5324 & 1.6195 \ldots \\
9444665628 & 2008008 & 1.5826 \ldots \\
305569492668 & 25347564 & 1.5512 \ldots \\
810724 & 6728 & 1.5436 \ldots \\
8876268 & 38988 & 1.5134 \ldots \\
882683550 & 893025 & 1.5032 \ldots \\
\end{array}
$$

There is nothing analogous to this situation (i.e. with $d$ bounded in terms of $N$) in case $m=3$. Indeed, if we take $N=1$, we can make $d+1$ and $2d+1$  both perfect squares by writing $d=x^2-1$ and imposing the condition that $2x^2-1=y^2$.

\section{Concluding remarks : small values of $d$} \label{Sec7}

Let us define, for $m \geq 2$,
$$
d_m = \min \{ d   :  \mbox{ there exist } N \mbox{ with } N, N+d, \ldots, N+(m-1)d \mbox{ powerful}  \}.
$$
This is clearly nondecreasing in $m$ and $d_2=1$. We suspect, after computation, that $d_3=24$ (corresponding to $1, 25, 49$).
The Erd\H{o}s-Mollin-Walsh conjecture \cite{MoWa} (which appears to originate in a remark of Golomb \cite{Go})  is that $d_3 > 1$. 
In general, from (\ref{ex}), we have
$$
d_m \leq  \prod_{p \leq m} p^2,
$$
where the product is over primes and it seems likely that, for $m \geq 4$,
$$
d_m =  \prod_{p \leq m} p^2.
$$

To deduce a lower bound upon $d_m$ (in case $m \geq 4$), notice that if we suppose that $p \nmid d$, then 
$$
p \mid N(N+d) \cdots (N+(p-1) d).
$$
The fact that $m \geq p$ and the assumption that $N, N+d, \ldots, N+(m-1)d$ are all squarefull therefore implies that
$$
p^2 \mid N(N+d) \cdots (N+(p-1)d).
$$
But then 
$$
p \, \| \,  (N+pd) (N+(p+1) d) \cdots (N+(2p-1)d),
$$
a contradiction if $m \geq 2p$. It follows that 
$$
\prod_{p \leq m/2} p \mid d,
$$
so that
 $$
m \ll \log d_m \ll m.
$$

Returning to the  Erd\H{o}s-Mollin-Walsh conjecture, it is interesting to note that various authors \cite{AkMu}, \cite{Coh}, \cite{Mo}, \cite{Ni}, \cite{Rib}, \cite{SW} misattribute this conjecture (and related questions on squarefull numbers in arithmetic progression) to  papers of Erd\H{o}s \cite{Erd-Eur}, \cite{Erd-Deb-76}, while such problems are actually only discussed in \cite{Erd-Man-76}. This highlights both past difficulties of journal access, and also Erd\H{o}s' remarkable productivity!

\bibliographystyle{amsplain}


  \end{document}